\newcommand{\arxiv}[1]{\href{http://arxiv.org/abs/#1}{\tt arXiv:\nolinkurl{#1}}}
\newcommand{\arXiv}[1]{\href{http://arxiv.org/abs/#1}{\tt arXiv:\nolinkurl{#1}}}
\newcommand{\googlebooks}[1]{(preview at \href{http://books.google.com/books?id=#1}{google books})}
\definecolor{dark-red}{rgb}{0.7,0.25,0.25}
\definecolor{dark-blue}{rgb}{0.15,0.15,0.55}
\definecolor{medium-blue}{rgb}{0,0,.8}
\definecolor{DarkGreen}{RGB}{0,150,0}
\definecolor{rho}{named}{red}
\theoremstyle{plain}
\newtheorem{thm}{Theorem}[section]
\newtheorem*{thm*}{Theorem}
\newtheorem{cor}[thm]{Corollary}
\newtheorem*{cor*}{Corollary}
\newtheorem{conj}[thm]{Conjecture}
\newtheorem*{conj*}{Conjecture}
\newtheorem{lem}[thm]{Lemma}
\newtheorem{prop}[thm]{Proposition}
\newtheorem*{quest*}{Question}
\newtheorem*{claim*}{Claim}
\theoremstyle{definition}
\newtheorem{construction}[thm]{Construction}
\newtheorem{defn}[thm]{Definition}
\newtheorem{rem}[thm]{Remark}
\DeclareMathOperator{\coev}{coev}
\DeclareMathOperator{\End}{End}
\DeclareMathOperator{\ev}{ev}
\DeclareMathOperator{\Forget}{Forget}
\DeclareMathOperator{\Hom}{Hom}
\DeclareMathOperator{\id}{id}
\DeclareMathOperator{\Irr}{Irr}
\DeclareMathOperator{\Tr}{Tr}
\DeclareMathOperator{\rev}{rev}
\newcommand{\comment}[1]{}
\newcommand{\be}{\begin{enumerate}[label=(\arabic*)]}
\newcommand{\ee}{\end{enumerate}}
\newcommand{\cZ}{\mathcal{Z}}
\newcommand{\cD}{\mathcal{D}}
\newcommand{\cE}{\mathcal{E}}
\newcommand{\cM}{\mathcal{M}}
\newcommand{\cC}{\mathcal{C}}
\def\semicolon{;}
\def\applytolist#1{
    \expandafter\def\csname multi#1\endcsname##1{
        \def\multiack{##1}\ifx\multiack\semicolon
            \def\next{\relax}
        \else
            \csname #1\endcsname{##1}
            \def\next{\csname multi#1\endcsname}
        \fi
        \next}
    \csname multi#1\endcsname}
\def\calc#1{\expandafter\def\csname c#1\endcsname{{\mathcal #1}}}
\def\bbc#1{\expandafter\def\csname bb#1\endcsname{{\mathbb #1}}}
\def\bfc#1{\expandafter\def\csname bf#1\endcsname{{\mathbf #1}}}
\def\sfc#1{\expandafter\def\csname s#1\endcsname{{\sf #1}}}
\newcommand{\Rep}{{\sf Rep}}
\newcommand{\Bim}{{\sf Bim}}
\renewcommand{\Vec}{{\sf Vec}}
\newcommand{\Hilb}{{\sf Hilb}}
\newcommand{\fdHilb}{{\sf Hilb_{fd}}}
\newcommand{\Tube}{{\sf Tube}}
\newcommand{\noshow}[1]{}
\newcommand{\MR}[1]{}
\tikzset{
	super thick/.style={line width=3pt}
}
\tikzstyle{knot}=[preaction={super thick, white, draw}]
\tikzstyle{shaded}=[fill=red!10!blue!20!gray!30!white]
\tikzstyle{unshaded}=[fill=white]
\tikzstyle{empty box}=[circle, draw, thick, fill=white, opaque, inner sep=2mm]
\tikzstyle{annular}=[scale=.7, inner sep=1mm, baseline]
\tikzstyle{rectangular}=[scale=.75, inner sep=1mm, baseline=-.1cm]
\tikzstyle{mid>}=[decoration={markings, mark=at position 0.5 with {\arrow{>}}}, postaction={decorate}]
\tikzstyle{mid<}=[decoration={markings, mark=at position 0.5 with {\arrow{<}}}, postaction={decorate}]
\tikzstyle{over}=[double, draw=white, super thick, double=]
\newcommand{\roundNbox}[6]{
	\draw[rounded corners=5pt, very thick, #1] ($#2+(-#3,-#3)+(-#4,0)$) rectangle ($#2+(#3,#3)+(#5,0)$);
	\coordinate (ZZa) at ($#2+(-#4,0)$);
	\coordinate (ZZb) at ($#2+(#5,0)$);
	\node at ($1/2*(ZZa)+1/2*(ZZb)$) {#6};
}
\newcommand{\ncircle}[5]{
	\draw[very thick, #1] #2 circle (#3);
	\node at #2 {#5};
	\filldraw[red] ($#2+(#4:#3cm)$) circle (.05cm);
}
\newcommand{\tikzmath}[2][]
     {\vcenter{\hbox{\begin{tikzpicture}[#1]#2
                     \end{tikzpicture}}}
     }
\newcommand{\tensor}[6]{
	\draw[rounded corners=5pt, very thick, unshaded] ($ #1 - (#2,#2) $) rectangle ($ #1 + (#2,#2) $);
	\draw ($ #1 + 14/23*(0,#2) - 1/3*(#2,0) $) -- ($ #1 - 14/23*(0,#2) - 1/3*(#2,0) $);
	\draw ($ #1 + 14/23*(0,#2) + 1/3*(#2,0) $) -- ($ #1 - 14/23*(0,#2) + 1/3*(#2,0) $);
	\draw[thick, red] ($ #1 - 1/3*(#2,0) - 1/5*(#2,0) $) -- ($ #1 - 5/6*(#2,0) $);
	\draw[thick, red] ($ #1 + 1/3*(#2,0) - 1/5*(#2,0) $) .. controls ++(180:.2cm) and ++(0:.2cm) .. ($ #1 - 1/3*(#2,0) + 2/5*(0,#2) $) .. controls ++(180:.2cm) and ++(0:.2cm) .. ($ #1 - 5/6*(#2,0) $);
	\draw[very thick] #1 ellipse ( {5/6*#2} and {2/3*#2});
	\filldraw[very thick, unshaded] ($ #1 + 1/3*(#2,0) $) circle (1/5*#2);
	\filldraw[very thick, unshaded] ($ #1 - 1/3*(#2,0) $) circle (1/5*#2);
	\node at ($ #1 + (.2,0) - 1/3*(#2,0) - 1/3*(0,#2) $) {\scriptsize{$#3$}};
	\node at ($ #1 + (.2,0) - 1/3*(#2,0) + 1/3*(0,#2) $) {\scriptsize{$#4$}};
	\node at ($ #1 + (.2,0) + 1/3*(#2,0) - 1/3*(0,#2) $) {\scriptsize{$#5$}};
	\node at ($ #1 + (.2,0) + 1/3*(#2,0) + 1/3*(0,#2) $) {\scriptsize{$#6$}};
}
\newcommand{\RevTensor}[6]{
	\draw[rounded corners=5pt, very thick, unshaded] ($ #1 - (#2,#2) $) rectangle ($ #1 + (#2,#2) $);
	\draw ($ #1 + 14/23*(0,#2) - 1/3*(#2,0) $) -- ($ #1 - 14/23*(0,#2) - 1/3*(#2,0) $);
	\draw ($ #1 + 14/23*(0,#2) + 1/3*(#2,0) $) -- ($ #1 - 14/23*(0,#2) + 1/3*(#2,0) $);
	\draw[thick, red] ($ #1 - 1/3*(#2,0) - 1/5*(#2,0) $) -- ($ #1 - 5/6*(#2,0) $);
	\draw[thick, red] ($ #1 + 1/3*(#2,0) - 1/5*(#2,0) $) .. controls ++(180:.2cm) and ++(0:.2cm) .. ($ #1 - 1/3*(#2,0) - 2/5*(0,#2) $) .. controls ++(180:.2cm) and ++(0:.2cm) .. ($ #1 - 5/6*(#2,0) $);
	\draw[very thick] #1 ellipse ( {5/6*#2} and {2/3*#2});
	\filldraw[very thick, unshaded] ($ #1 + 1/3*(#2,0) $) circle (1/5*#2);
	\filldraw[very thick, unshaded] ($ #1 - 1/3*(#2,0) $) circle (1/5*#2);
	\node at ($ #1 + (.2,0) - 1/3*(#2,0) - 1/3*(0,#2) $) {\scriptsize{$#3$}};
	\node at ($ #1 + (.2,0) - 1/3*(#2,0) + 1/3*(0,#2) $) {\scriptsize{$#4$}};
	\node at ($ #1 + (.2,0) + 1/3*(#2,0) - 1/3*(0,#2) $) {\scriptsize{$#5$}};
	\node at ($ #1 + (.2,0) + 1/3*(#2,0) + 1/3*(0,#2) $) {\scriptsize{$#6$}};
}
\newcommand{\PsiTensor}[6]{
	\draw[rounded corners=5pt, very thick, unshaded] ($ #1 - (#2,#2) - (.2,0) $) rectangle ($ #1 + (#2,#2) $);
	\draw ($ #1 + 14/23*(0,#2) - 1/3*(#2,0) $) -- ($ #1 - 14/23*(0,#2) - 1/3*(#2,0) $);
	\draw ($ #1 + 14/23*(0,#2) + 1/3*(#2,0) $) -- ($ #1 - 14/23*(0,#2) + 1/3*(#2,0) $);
	\draw[thick, red] ($ #1 - 1/3*(#2,0) - 1/5*(#2,0) $) -- ($ #1 - 5/6*(#2,0) $);
	\draw[thick, red] ($ #1 + 1/3*(#2,0) - 1/5*(#2,0) $) .. controls ++(180:.2cm) and ++(0:.2cm) .. ($ #1 - 1/3*(#2,0) + 2/5*(0,#2) $) .. controls ++(180:.2cm) and ++(0:.2cm) .. ($ #1 - 5/6*(#2,0) $);
	\draw[very thick] #1 ellipse ( {5/6*#2} and {2/3*#2});
	\filldraw[very thick, unshaded] ($ #1 + 1/3*(#2,0) $) circle (1/5*#2);
	\filldraw[very thick, unshaded] ($ #1 - 1/3*(#2,0) $) circle (1/5*#2);
	\node at ($ #1 + (.2,0) - 1/3*(#2,0) - 1/3*(0,#2) $) {\scriptsize{$#3$}};
	\node at ($ #1 + (.2,0) - 1/3*(#2,0) + 1/3*(0,#2) $) {\scriptsize{$#4$}};
	\node at ($ #1 + (.2,0) + 1/3*(#2,0) - 1/3*(0,#2) $) {\scriptsize{$#5$}};
	\node at ($ #1 + (.2,0) + 1/3*(#2,0) + 1/3*(0,#2) $) {\scriptsize{$#6$}};
  \node at ($ #1 - (#2,0) $) {$\Psi$};
}
\newcommand{\PsiRevTensor}[6]{
	\draw[rounded corners=5pt, very thick, unshaded] ($ #1 - (#2,#2) - (.2,0) $) rectangle ($ #1 + (#2,#2) $);
	\draw ($ #1 + 14/23*(0,#2) - 1/3*(#2,0) $) -- ($ #1 - 14/23*(0,#2) - 1/3*(#2,0) $);
	\draw ($ #1 + 14/23*(0,#2) + 1/3*(#2,0) $) -- ($ #1 - 14/23*(0,#2) + 1/3*(#2,0) $);
	\draw[thick, red] ($ #1 - 1/3*(#2,0) - 1/5*(#2,0) $) -- ($ #1 - 5/6*(#2,0) $);
	\draw[thick, red] ($ #1 + 1/3*(#2,0) - 1/5*(#2,0) $) .. controls ++(180:.2cm) and ++(0:.2cm) .. ($ #1 - 1/3*(#2,0) - 2/5*(0,#2) $) .. controls ++(180:.2cm) and ++(0:.2cm) .. ($ #1 - 5/6*(#2,0) $);
	\draw[very thick] #1 ellipse ( {5/6*#2} and {2/3*#2});
	\filldraw[very thick, unshaded] ($ #1 + 1/3*(#2,0) $) circle (1/5*#2);
	\filldraw[very thick, unshaded] ($ #1 - 1/3*(#2,0) $) circle (1/5*#2);
	\node at ($ #1 + (.2,0) - 1/3*(#2,0) - 1/3*(0,#2) $) {\scriptsize{$#3$}};
	\node at ($ #1 + (.2,0) - 1/3*(#2,0) + 1/3*(0,#2) $) {\scriptsize{$#4$}};
	\node at ($ #1 + (.2,0) + 1/3*(#2,0) - 1/3*(0,#2) $) {\scriptsize{$#5$}};
	\node at ($ #1 + (.2,0) + 1/3*(#2,0) + 1/3*(0,#2) $) {\scriptsize{$#6$}};
  \node at ($ #1 - (#2,0) $) {$\Psi$};
}
\newcommand{\evaluationMap}[3]{
	\draw[rounded corners=5pt, very thick, unshaded] ($ #1 - (#2,#2) - (.4,0) $) rectangle ($ #1 + (#2,#2) $);
	\draw ($ #1 - 2/3*(#2,0) - 1/2*(0,#2) $) .. controls ++(90:{#2}) and ++(90:{#2}) .. ($ #1 + 2/3*(#2,0) - 1/2*(0,#2) $);
	\draw[very thick] #1 circle ({5/6*#2});
	\filldraw[red] ($ #1 - 5/6*(#2,0) $) circle ({1/10*#2});
	\node at ($ #1 - (0,.1) $) {\scriptsize{$#3$}};
  \node at ($ #1 - (#2,0) -(.2,0) $) {$\Psi$};
}
\newcommand{\coevaluationMap}[3]{
	\draw[rounded corners=5pt, very thick, unshaded] ($ #1 - (#2,#2) - (.4,0) $) rectangle ($ #1 + (#2,#2) $);
	\draw ($ #1 - 2/3*(#2,0) + 1/2*(0,#2) $) .. controls ++(270:{#2}) and ++(270:{#2}) .. ($ #1 + 2/3*(#2,0) + 1/2*(0,#2) $);
	\draw[very thick] #1 circle ({5/6*#2});
	\filldraw[red] ($ #1 - 5/6*(#2,0) $) circle ({1/10*#2});
	\node at ($ #1 + (0,.1) $) {\scriptsize{$#3$}};
  \node at ($ #1 - (#2,0) -(.2,0) $) {$\Psi$};
}
\newcommand{\identityMap}[3]{
	\draw[rounded corners=5pt, very thick, unshaded] ($ #1 - (#2,#2) - (.4,0) $) rectangle ($ #1 + (#2,#2) $);
	\draw ($ #1 + 5/6*(0,#2) $) -- ($ #1 - 5/6*(0,#2) $);
	\draw[very thick] #1 circle ({5/6*#2});
	\filldraw[red] ($ #1 - 5/6*(#2,0) $) circle ({1/10*#2});
	\node at ($ #1 + (.2,0) $) {\scriptsize{$#3$}};
  \node at ($ #1 - (#2,0) -(.2,0) $) {$\Psi$};
}
\begin{document}
\title{Classification of finite depth objects in bicommutant categories via anchored planar algebras}
\author{Andr\'e Henriques, David Penneys, James Tener}
\date{}
\maketitle
\begin{abstract}
In our article [arXiv:1511.05226], we studied the commutant $\mathcal{C}'\subset \operatorname{Bim}(R)$ of a unitary fusion category $\mathcal{C}$, where $R$ is a hyperfinite factor of type $\rm II_1$, $\rm II_\infty$, or $\rm III_1$, and showed that it is a bicommutant category.
In other recent work [arXiv:1607.06041, arXiv:2301.11114] we introduced the notion of a (unitary) anchored planar algebra in a (unitary) braided pivotal category $\mathcal{D}$, and showed that they classify (unitary) module tensor categories for $\mathcal{D}$ equipped with a distinguished object.

Here, we connect these two notions and show that finite depth objects of $\mathcal{C}'$ are classified by connected finite depth unitary anchored planar algebras in $\mathcal{Z}(\mathcal{C})$.
This extends the classification of finite depth objects of $\operatorname{Bim}(R)$ by connected finite depth unitary planar algebras.
\end{abstract}

\tableofcontents

\section{Introduction}

Let $R$ be a hyperfinite factor which is either of type $\mathrm{II}_1$, $\mathrm{II}_\infty$, or $\mathrm{III}_1$.
And let $X\in\Bim(R)$ be a finite index (=dualizable) self-dual bimodule.
Recall that $X$ is said to have \emph{finite depth} if the subcategory of $\Bim(R)$ that it generates is a fusion category:
the object $X$ has finite depth iff
the total numer of isomorphism classes of irreducible $R$-$R$-bimodules which appear as summands of $X^{\boxtimes n}$, $n\in\bbN$, is finite.
The bimodule $X$ is called \emph{symmetrically self-dual} if it comes equipped with a unitary isomorphism $r:X\stackrel{\scriptscriptstyle \cong}{\rightarrow} \bar X$, which is fixed under the involution
$r\mapsto \bar r^*:\Hom(X, \bar X)\,\tikz[line width = .17mm,yscale=1.05,xscale=-1.05]{\draw[line cap=round] (40:.11) arc (40:360-41:.11);
\path (40:.11) arc (180+20:180-20:.11) coordinate (x);
\draw[line cap=round] (x) arc (180-20:180+20:.11) arc (85:85+40:.11);}
$.

For $R$ as above,
there is a well-known correspondence 
(see \cite{MR1055708,MR1339767,MR3635673,MR4236062}, or \cite[\S 3.2]{MR4581741} for a review) 
between conjugacy classes of finite depth symmetrically self-dual bimodules, and isomorphism classes of unitary connected finite depth planar algebras:
\[
\left\{
\parbox{5.3cm}{
\centerline{Finite depth, symm. self-dual} \centerline{$R$-$R$-bimodules}
}
\!\left\}\left/\text{conj.}
\,\leftrightarrow\,\,\,
\left\{
\parbox{4.3cm}{
\centerline{Connected finite depth} \centerline{unitary planar algebras}
}
\right\}\right.\right.\right.
\!\!\bigg/\text{iso.}
\]
In \cite[Conj.~1.2]{2301.11114}, we conjectured that this correspondence extends to the a certain class of bicommutant categories:

\begin{conj}\label{conjIntro}
Let $\cB$ be a bicommutant category which admits an absorbing object whose endomorphism algebra is a hyperfinite $\mathrm{II}_\infty$ or $\mathrm{III}_1$ factor.
Assume that $Z(\cB)\cong \Hilb(\cV)$ for some unitary braided fusion category $\cV$.
Then there exists a natural bijective correspondence:
\[
\left\{
\parbox{4.6cm}{
\rm Finite depth objects of $\cB$
}
\!\left\}\left/\text{\rm conj.}
\,\leftrightarrow\,\,\,
\left\{
\parbox{5.6cm}{
\rm Connected finite depth unitary \centerline{\rm anchored planar algebras in $\cV$}
}
\right\}\right.\right.\right.
\!\!\bigg/\text{\rm iso.}
\]
(Objects $X, Y\in\cB$ are called conjugate if there exists an invertible object $U\in \cB$ such that $Y\cong UXU^{-1}$.)
\end{conj}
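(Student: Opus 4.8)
The plan is to build the correspondence as a pair of mutually inverse constructions, closely following the strategy used for the proven case $\cB=\cC'$, and then to match the two quotients. Throughout, write $A$ for the absorbing object and $R=\End_\cB(A)$ for its hyperfinite factor, and recall that $Z(\cB)\cong\Hilb(\cV)$ equips $\cB$ with a central functor from $\cV$, so each object of $\cB$ carries a half-braiding against the image of $\cV$.

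\textbf{From objects to anchored planar algebras.} Given a finite depth object $X\in\cB$, I would first pass to the unitary fusion subcategory $\cM\subseteq\cB$ generated by $X$ together with the central image of $\cV$; finiteness of depth, combined with finiteness of $\cV$, is exactly what makes $\cM$ fusion, and restricting half-braidings to $\cM$ yields a braided functor $\cV\to Z(\cM)$, so that $(\cM,X)$ is a unitary $\cV$-module tensor category with distinguished generator $X$. Applying the classification of \cite{2301.11114} in the direction (module tensor category)$\rightsquigarrow$(anchored planar algebra) then produces a unitary anchored planar algebra $P(X)$ in $\cV$; concretely its box spaces are the $\Hom$-spaces between tensor powers of $X$ anchored at $A$, with cups, caps and rotations coming from the rigid pivotal structure and the $\cV$-strings threaded through by the half-braidings. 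Connectedness reflects the factoriality built into the hypotheses, and finite depth of $P(X)$ is inherited from that of $X$. One then checks that conjugating $X$ by an invertible object $U$ with $\End(U)=\C$ induces an isomorphism of anchored planar algebras, so that $P$ descends to the conjugacy quotient.

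\textbf{From anchored planar algebras to objects.} In the reverse direction I would start from a connected finite depth unitary anchored planar algebra $P$ in $\cV$ and invoke \cite{2301.11114} the other way: $P$ reconstructs a unitary $\cV$-module tensor category $(\cM,M)$ with distinguished generator $M$, and connectedness together with finite depth forces $\cM$ to be fusion. It then remains to \emph{realize} $\cM$ inside $\cB$: to produce a fully faithful unitary $\cV$-module tensor functor $\cM\hookrightarrow\cB$ sending $M$ to an object $X$, using $R=\End(A)$ as the hyperfinite ``room'' in the target, in the same way that every unitary fusion category embeds into $\Bim(R)$ for hyperfinite $R$. The absorbing property of $A$ should make such a realization exist and be unique up to conjugation by an invertible object, which is precisely the ambiguity quotiented out on the left. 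Verifying that $P\mapsto X$ and $X\mapsto P(X)$ are mutually inverse would then reduce to naturality of the \cite{2301.11114} equivalence.

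\textbf{The main obstacle.} The hard step is the realization in the reverse direction for a \emph{general} unitary braided fusion category $\cV$. A tempting shortcut would be to present $\cB$ itself as $\cC'$ for some unitary fusion category $\cC$ with $Z(\cC)\cong\cV$ and then quote the theorem; but this presupposes that $\cV$ is a Drinfeld center, which is not among the hypotheses, since the axioms only supply $Z(\cB)\cong\Hilb(\cV)$. One therefore needs an \emph{intrinsic} embedding theorem --- a categorified analogue of the realization of every unitary fusion category inside $\Bim(R)$ for the hyperfinite factor $R$ --- built purely from the bicommutant axioms together with the existence of an absorbing object with hyperfinite $\mathrm{II}_\infty$ or $\mathrm{III}_1$ endomorphism factor, and compatible with the $\cV$-action. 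Establishing this realization, and its uniqueness up to conjugacy, is the crux, and is the reason the statement is posed as a conjecture rather than proved in full generality.
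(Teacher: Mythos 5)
You are attempting to prove Conjecture~\ref{conjIntro}, which the paper itself does not prove: the paper only establishes the special case $\cB=\cC'$, where $\cC$ is a unitary fusion category fully faithfully embedded in $\Bim(R)$, so that $\cV=Z(\cC)^{\rev}$ is (up to reversal) a Drinfeld center. Your proposal is therefore, correctly, not a complete proof, and your concluding assessment of where it breaks down is accurate. Moreover, the two constructions you outline coincide with the paper's strategy in the special case: in the forward direction the paper forms $\cM=\langle m, Z(\cC)\rangle\subset\cC'$, notes that finite depth of $m$ makes $\cM$ fusion (Lemma~\ref{lem:CommutingFusionCategoriesGenerateFusion}), equips $\cM$ with a $Z(\cC)^{\rev}$-module structure by restricting half-braidings (Corollary~\ref{cor:M is a Z(C)rev module tensor cat}), and applies the equivalence of \cite{2301.11114}; in the reverse direction it reconstructs a pointed module tensor category $(\cM,m)$ from the anchored planar algebra and then realizes $\cM$ inside $\cC'$, with uniqueness up to conjugation by an invertible object of $\cC'$.

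The gap you name---the realization step for a general unitary braided fusion category $\cV$---is precisely the obstruction, and it is worth recording concretely why the paper's method does not generalize. In the special case, realization proceeds by forming the balanced tensor product $\cE=\cM\boxtimes_{Z(\cC)}\cC$ (fusion by Corollary~\ref{cor:RelativeProductFusion} via Lemma~\ref{lem:CanonicalInclusionsFF}), embedding $\cE$ into $\Bim(R)$ by the classical realization theorem for unitary fusion categories over hyperfinite factors \cite{MR1055708,MR1339767,MR3635673,MR4236062}, normalized so that $\alpha|_\cC=\id_\cC$, and then using the canonical centralizing structure of Construction~\ref{construction:CanonicalCentralizing} to push $\cM$ into $\cC'=Z_\cC(\Bim(R))$; full faithfulness of $\cM\to\cC'$ rests on Lemma~\ref{lem:RelativeCenterFF}, and uniqueness up to conjugation rests on the factorization $\Bim(R)\cong\cC'\boxtimes_{Z(\cC)}\cC$ of Corollary~\ref{cor:FactorizeBimR}, itself a consequence of Theorem~\ref{thm:FactorizeViaCentralizers} applied with the tube-algebra/$Z(\cC)$ representation theory. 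Every one of these steps uses the ambient fusion category $\cC$ and the receptacle $\Bim(R)$, neither of which is available for a general bicommutant category $\cB$ with $Z(\cB)\cong\Hilb(\cV)$: as you observe, $\cV$ need not be a Drinfeld center, and even if it were, one would still need to know $\cB\cong\cC'$. So your proposal should be read as a correct reduction of the conjecture to the intrinsic realization-and-uniqueness problem you isolate; that problem is open, which is exactly why the statement stands as a conjecture rather than a theorem.
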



The goal of this article is to provide evidence for the above conjecture, by proving it for certain bicommutant categories associated to unitary fusion categories. 
Let $R$ be a hyperfinite factor of type $\mathrm{II}_1$, $\mathrm{III}_\infty$, or $\mathrm{III}_1$.
Let $\cC\subset \Bim(R)$ be a unitary fusion subcategory, embedded in $\Bim(R)$ via a fully faithful unitary tensor functor, and let $\cC'$ be its commutant category (see \cite{MR3663592,MR4581741}, or Definition~\ref{def: C'} below).
The main result of this paper states that
Conjecture~\ref{conjIntro} is true when $\cB = \cC'$.
In this case, $Z(\cC')\cong \Hilb(Z(\cC)^{\rev})$ (Theorem~\ref{thm:Z(C')=Z(C)rev}), so finite depth objects of $\cC'$ correspond to connected finite depth unitary anchored planar algebras in $Z(\cC)^{\rev}$.
As one step in our proof, we prove, in Corollary \ref{cor:FactorizeBimR}, that there is an equivalence $\Bim(R)\cong\cC' \boxtimes_{Z(\cC)}\cC$.

\paragraph{Acknowledgements.}

David Penneys was supported by NSF DMS grants 1500387/1655912, 1654159, and 2154389. 
James Tener was supported by Australian Research Council Discovery Project DP200100067, as
well as by the Max Planck Institute for Mathematics, Bonn, during the initial stages of this
work.

\section{Background}

In our companion paper \cite{2301.11114} we introduced the notion of a unitary anchored planar algebra, and we showed that there is a bijective correspondence between connected finite depth unitary anchored planar algebras and unitary module fusion categories equipped with the choice of a symmetrically self-dual object.

\subsection{Unitary anchored planar algebras and unitary module tensor categories}

We rapidly recall the notion of unitary anchored planar algebra from \cite{MR4528312,2301.11114}. 
Let $\cV$ be a unitary ribbon fusion category.
Its unique unitary spherical structure induces a twist denoted $\theta_v:v\to v$.

\begin{defn}
A unitary anchored planar algebra $(\cP,r,\psi)$ over $\cV$ consists of:
\begin{enumerate}[label=(\arabic*)]
\item 
A sequence of \emph{box objects} $\cP[n]$ of $\cV$ for each $n\in \bbN_{\geq 0}$ together with a map $Z(T)$ in $\cV$ from the tensor product of the input box objects to the output box object corresponding to each anchored planar tangle $T$.
For example:
$$
Z\left(
\begin{tikzpicture}[baseline =-.1cm]
\pgftransformyscale{-1}
	\coordinate (a) at (0,0);
	\coordinate (b) at ($ (a) + (1.4,1) $);
	\coordinate (c) at ($ (a) + (.6,-.6) $);
	\coordinate (d) at ($ (a) + (-.6,.6) $);
	\coordinate (e) at ($ (a) + (-.8,-.6) $);
	\ncircle{}{(a)}{1.6}{89}{}
	\draw[thick, red] (d) .. controls ++(225:1.2cm) and ++(275:2.6cm) .. ($ (a) + (89:1.6) $);
	\draw[thick, red] ($ (a) + (89:1.6) $) to[out=-60, in=40] (.9,-.3);
	\draw (60:1.6cm) arc (150:300:.4cm);
	\draw ($ (c) + (0,.4) $) arc (0:90:.8cm);
	\draw ($ (c) + (-.4,0) $) circle (.25cm);
	\draw ($ (d) + (0,.88) $) -- (d) -- ($ (d) + (-.88,0) $);
	\draw ($ (c) + (0,-.88) $) -- (c) -- ($ (c) + (.88,0) $);
	\draw (e) circle (.25cm);
	\ncircle{unshaded}{(d)}{.4}{235}{}
	\ncircle{unshaded}{(c)}{.4}{225+180}{}
	\node[blue] at (c) {\small 2};
	\node[blue] at (d) {\small 1};
\end{tikzpicture}
\right)
:
\cP[3]\otimes \cP[5] \to \cP[6].
$$
This data should satisfy: 
\begin{itemize}
\item
(identity) the identity anchored tangle acts as the identity morphism
\item
(composition) if $S$ and $T$ are composable anchored planar tangles at input disk $i$ of $S$, then
$Z(S\circ_i T)=Z(S)\circ_i Z(T)$.

\item
(anchor dependence) the following relations hold:
\begin{itemize}
\item
(braiding)\hspace{.7cm}
$
Z\left(
\begin{tikzpicture}[baseline = -.1cm]
	\draw (-.6,-.2) -- (-.6,1);
	\draw (0,1) -- (0,.6);
	\draw (.6,-.2) -- (.6,1);
	\draw[thick, red] (0,-.6) -- (0,-1);
	\draw[thick, red] (0,.2) arc (0:-90:.2cm) -- (-.6,0) arc (90:270:.4cm) -- (-.2,-.8) arc (90:0:.2cm);
	\roundNbox{}{(0,0)}{1}{.2}{.2}{}
	\roundNbox{unshaded}{(0,-.4)}{.2}{.6}{.6}{}
	\roundNbox{unshaded}{(0,.4)}{.2}{.2}{.2}{}
	\node at (-.8,.8) {\scriptsize{$i$}};
	\node at (-.2,.8) {\scriptsize{$j$}};
	\node at (.8,.8) {\scriptsize{$k$}};
	\fill[red] (0,.2) circle (.05)  (0,-.6) circle (.05)  (0,-1) circle (.05);
\end{tikzpicture}
\right)
=
Z
\left(
\begin{tikzpicture}[baseline = -.1cm]
	\draw (-.6,-.2) -- (-.6,1);
	\draw (0,1) -- (0,.6);
	\draw (.6,-.2) -- (.6,1);
	\draw[thick, red] (0,-.6) -- (0,-1);
	\draw[thick, red] (0,.2) arc (180:270:.2cm) -- (.6,0) arc (90:-90:.4cm) -- (.2,-.8) arc (90:180:.2cm);
	\roundNbox{}{(0,0)}{1}{.2}{.2}{}
	\roundNbox{unshaded}{(0,-.4)}{.2}{.6}{.6}{}
	\roundNbox{unshaded}{(0,.4)}{.2}{.2}{.2}{}
	\node at (-.8,.8) {\scriptsize{$i$}};
	\node at (-.2,.8) {\scriptsize{$j$}};
	\node at (.8,.8) {\scriptsize{$k$}};
	\fill[red] (0,.2) circle (.05)  (0,-.6) circle (.05)  (0,-1) circle (.05);
\end{tikzpicture}
\right)
\,
\circ \beta_{\cP[j],\cP[i+k]}
$
\item
(twist)\hspace{1.3cm}
$
Z\left(
\begin{tikzpicture}[baseline=-.1cm]
	\ncircle{unshaded}{(0,0)}{1}{270}{}
	\ncircle{unshaded}{(0,0)}{.3}{270}{}
	\draw (90:.3cm) -- (90:1cm);
	\draw[thick, red] (-90:.3cm) .. controls ++(270:.3cm) and ++(270:.5cm) .. (0:.5cm) .. controls ++(90:.8cm) and ++(90:.8cm) .. (180:.7cm) .. controls ++(270:.6cm) and ++(90:.4cm) .. (270:1cm);
	\node at (100:.8cm) {\scriptsize{$n$}};
\end{tikzpicture}
\right)
=\theta_{\cP[n]}
$.
\end{itemize}
(Here, an $n$ next to a string indicates $n$ parallel strings.)
\end{itemize}

\item
Real structures $r_n : \cP[n] \to \overline{\cP[n]}$ satisfying 
for every anchored planar tangle $T$ 
$$ 
\overline{Z(T)}
\circ 
(r\otimes \cdots \otimes r)
=
r\circ 
Z(\overline{T}),
$$
where $\overline{T}$ is the reflection of $T$.

\item
A morphism $\psi_\cP: \cP[0] \to 1_\cV$ satisfying
$
\tikzmath{
\draw (.3,0) --node[above]{$\scriptstyle \cP[0]$} (1,0);
\draw[dotted] (1.6,0) --node[above]{$\scriptstyle 1_\cV$} (2.1,0);
\draw[dotted] (-.8,0) --node[above]{$\scriptstyle 1_\cV$} (-.3,0);
\roundNbox{unshaded}{(0,0)}{.3}{0}{0}{}
\draw[very thick] (0,0) circle (.2cm);
\filldraw[red] (-.2,0) circle (.05cm);
\roundNbox{unshaded}{(1.3,0)}{.3}{0}{0}{$\psi_\cP$}
}
=
\id_{1_\cV}
$, and
for every $n\geq 0$, we have an equality 
$$
\tikzmath{
	\coordinate (a) at (0,0);
	\pgfmathsetmacro{\boxWidth}{1};
	\draw (-3.2,-.5) -- (-1,-.5);
	\draw (-3.2,.5) -- (-1,.5);
	\draw (1,0) -- (2,0);
	\draw[dotted] (2,0) -- (3,0);
	\roundNbox{unshaded}{(-2.1,-.5)}{.3}{.1}{.1}{$r_n^{-1}$}
	\roundNbox{unshaded}{(2,0)}{.3}{0}{0}{$\psi_\cP$}
	\draw[rounded corners=5pt, very thick, unshaded] ($ (a) - (\boxWidth,\boxWidth) $) rectangle ($ (a) + (\boxWidth,\boxWidth) $);
	\draw ($ (a) + 1/3*(0,1) $) -- ($ (a) - 1/3*(0,\boxWidth) $);
	\draw[thick, red] ($ (a) + 1/3*(0,\boxWidth) - 1/5*(\boxWidth,0) $) -- ($ (a) - 2/3*(\boxWidth,0) $);
	\draw[thick, red] ($ (a) - 1/3*(0,\boxWidth) - 1/5*(\boxWidth,0) $) -- ($ (a) - 2/3*(\boxWidth,0) $);
	\draw[very thick] (a) ellipse ({2/3*\boxWidth} and {5/6*\boxWidth});
	\filldraw[very thick, unshaded] ($ (a) + 1/3*(0,\boxWidth) $) circle (1/5*\boxWidth);
	\filldraw[very thick, unshaded] ($ (a) - 1/3*(0,\boxWidth) $) circle (1/5*\boxWidth);
	\node at ($ (a) + (.2,0) $) {\scriptsize{$n$}};
	\node at (-2.85,-.3) {\scriptsize{$\overline{\cP[n]}$}};
	\node at (-1.35,-.3) {\scriptsize{$\cP[n]$}};
	\node at (-2,.7) {\scriptsize{$\cP[n]$}};
	\node at (1.35,.2) {\scriptsize{$\cP[0]$}};
	\node at (2.65,.2) {\scriptsize{$1_\cV$}};
}
=
\coev^\dag_{\cP[n]}.
$$
\end{enumerate}
\end{defn}

We now rapidly recall the notion of a unitary module fusion category from \cite{MR3578212,2301.11114}.
Let $\cV$ be as above.

\begin{defn}
A \emph{unitary module fusion category} $(\cC,\Phi^Z)$ consists of a unitary fusion category $\cC$ together with a pivotal braided unitary tensor functor $\Phi^Z: \cV\to Z(\cC)$.
\end{defn}

A \emph{pointing} of $(\cC,\Phi^Z)$ consists of a real object $c\in \cC$ such that $c$ and $\Phi^Z(\cV)$ generate $\cC$ under taking tensor product, orthogonal direct sum, and orthogonal direct summands.

Our main theorems in \cite{MR4528312,2301.11114} established an equivalence of categories
\[
\left\{
\parbox{5.6cm}{
\rm Connected finite depth unitary \centerline{\rm anchored planar algebras in $\cV$}
}
\left\}
\,\,\,\,\cong\,\,
\left\{\,\parbox{4.3cm}{\rm Pointed unitary module fusion\;\! categories over $\cV$}\,\right\}\right.\right.\!\!.
\]

\subsection{Bicommutant categories}
\label{sec:BicommutantCats}

The first author introduced the notion of bicommutant category in \cite{MR3747830}, and examples were constructed from unitary fusion categories and from conformal nets in \cite{MR3663592} and \cite{1701.02052}, respectively.
We refer the reader to \cite{MR1444286,MR3663592,MR3687214} for the basics of $\rm C^*$-tensor categories.

\begin{defn}
A \emph{bi-involutive} tensor category is a $\rm C^*$-tensor category $\cC$ equipped with a covariant anti-linear unitary functor $\overline{\,\cdot\,}:\cC\to \cC$ called the \emph{conjugate}.
There are coherence natural isomorphisms $\varphi_c: c\to \overline{\overline{c}}$, $\nu_{a,b}:\overline{a}\otimes \overline{b}\to \overline{b\otimes a}$ and $r: 1\to \overline{1}$ satisfying monoidal coherences (see \cite[Def.~2.3]{MR3663592}).
\end{defn}

Basic examples include $\Hilb$, $\Bim(R)$ for a von Neumann algebra $R$, and unitary tensor categories, a.k.a.~semisimple rigid $\rm C^*$-tensor categories with simple unit object.
We refer the reader to \cite[p.5]{MR3663592} or \cite[\S3.5]{MR4133163} for more details on this last example.

\begin{defn}
A \emph{bi-involutive tensor functor} $F: \cA\to \cB$ between bi-involutive tensor categories is a unitary tensor functor equipped with a unitary natural isomorphism $\chi_a : F(\overline{a})\to \overline{F(a)}$ for $a\in \cA$ satisfying monoidal and involutive coherences (see \cite[Def.~2.5]{MR3663592} or \cite[Def.~3.35]{MR4133163}).

A \emph{representation} of a bi-involutive category $\cC$ is a von Neumann algebra $R$ together with a bi-involutive tensor functor $\alpha: \cC\to \Bim(R)$.
\end{defn}

In \cite[\S5]{MR4581741}, we introduced a certain extra structure on $\Bim(R)$ which we called a \emph{positive structure}. 
These positive structures play no role in the present paper, as representations of unitary fusion categories uniquely extend to positive representations by \cite[Thm.~A]{MR4581741}, so we will not emphasize them here.

\begin{defn}\label{def: C'}
Suppose $(\cC,\alpha)$ is a bi-involutive tensor category equipped with a representation into $\Bim(R)$.
The \emph{commutant category} $\cC'$ is the unitary relative center of $\alpha(\cC)$ inside $\Bim(R)$ \cite[\S2.3]{MR3663592}.
It has:
\begin{itemize}
\item
objects bimodules $X\in \Bim(R)$ equipped with half-braidings $e_X=\{e_{X,c}: X\boxtimes \alpha(c)\to \alpha(c)\boxtimes X\}_{c\in \cC}$ satisfying the usual hexagon relation and naturality relation.
Our graphical convention for the half-braiding $e_X$ is
$$
e_{X,c}
=
\tikzmath{
\draw (.4,-.4) node[below]{$\scriptstyle \alpha(c)$} to[out=90, in=-90] (-.4,.4) node[above]{$\scriptstyle \alpha(c)$};
\draw[knot] (-.4,-.4) node[below]{$\scriptstyle X$} to[out=90, in=-90] (.4,.4) node[above]{$\scriptstyle X$};
}
:
X\boxtimes \alpha(c) \to \alpha(c)\boxtimes X.
$$

\item
morphisms $f: (X,e_X) \to (Y,e_Y)$ are bimodule maps $f: X\to Y$ which commute with the half-braidings, i.e., 
$$
\tikzmath{
\draw (.4,-1.3) node[below]{$\scriptstyle \alpha(c)$} -- (.4,.-.4) to[out=90, in=-90] (-.4,.4) node[above]{$\scriptstyle \alpha(c)$};
\draw[knot] (-.4,-.4) node[left, yshift=.2cm]{$\scriptstyle Y$}  to[out=90, in=-90] (.4,.4) node[above]{$\scriptstyle Y$};
\draw (-.4,-1.3) node[below]{$\scriptstyle X$} -- (-.4,-1);
\roundNbox{}{(-.4,-.7)}{.3}{0}{0}{$f$}
}
=
\tikzmath{
\draw (.4,-.4) node[below]{$\scriptstyle \alpha(c)$} to[out=90, in=-90] (-.4,.4) -- (-.4,1.3) node[above]{$\scriptstyle \alpha(c)$};
\draw[knot] (-.4,-.4) node[below]{$\scriptstyle X$}  to[out=90, in=-90] (.4,.4) node[right, yshift=-.2cm]{$\scriptstyle X$};
\draw (.4,1.3) node[above]{$\scriptstyle Y$} -- (.4,1);
\roundNbox{}{(.4,.7)}{.3}{0}{0}{$f$}
}\,.
$$
\item
tensor product is given by
$$
(X,e_X)\otimes (Y,e_Y)
:=
\left(X\boxtimes Y, e_{X\boxtimes Y}
\right)
\qquad\qquad
e_{X\boxtimes Y, c}
:=
\tikzmath{
\draw (.4,-.4) node[below]{$\scriptstyle \alpha(c)$} to[out=90, in=-90] (-.4,.4) to[out=90, in=-90] (-1.2,1.2) node[above]{$\scriptstyle \alpha(c)$};
\draw[knot] (-.4,-.4) node[below]{$\scriptstyle Y$} to[out=90, in=-90] (.4,.4) -- (.4,1.2) node[above]{$\scriptstyle X$};
\draw[knot] (-1.2,-.4) node[below]{$\scriptstyle X$} -- (-1.2,.4) to[out=90, in=-90] (-.4,1.2) node[above]{$\scriptstyle X$};
}\,.
$$
\item the bi-involutive structure is given in \cite[\S2.3]{MR3663592}.
\end{itemize}
\end{defn}

Observe that $\cC'$ comes equipped with a canonical representation $\alpha':\cC'\to \Bim(R)$ by forgetting the half-braiding.
It thus makes sense to talk about the \emph{bi-commutant} $\cC''$.
Observe there is a canonical bi-involutive tensor functor $\cC\to \cC''$ given by $c\mapsto (\alpha(c), \{e_{X,c}^{-1}\}_{(X,e_X)\in \cC'})$, where  $e_{X,c}: X\boxtimes \alpha(c)\to \alpha(c)\boxtimes X$ is the half-braiding for $X\in \cC'$.

\begin{defn}
Let $R$ be a von Neumann algebra.
A \emph{bicommutant category} in $\Bim(R)$ is a bi-involutive tensor category $\cB$ equipped with a representation $$\alpha: \cB\to \Bim(R)$$ such that the canonical map $\cB\to \cB''$ is an equivalence.
(The category $\cB$ is equipped with a positive structure, inherited from $\Bim(R)$.)
\end{defn}

\begin{thm}\label{thm:Z(C')=Z(C)rev}
Suppose that $\cB$ is a bicommutant category in $\Bim(R)$.
There is an equivalence of braided categories $Z(\cB)^{\rev}\to Z(\cB')$ making the following diagram commute:
\begin{equation}
\label{eq:Z(C')=Z(C)rev commutes}
\begin{tikzcd}
Z(\cB)^{\rev}
\arrow[dr,swap,"\operatorname{Canonical}"]
\arrow[rr,"\cong"]
&&
Z(\cB')
\arrow[dl,"\operatorname{Forget}"]
\\
& \cB'.
\end{tikzcd}
\end{equation}
\end{thm}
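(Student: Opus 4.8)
The plan is to write down an explicit braided functor $\Phi\colon Z(\cB)^{\rev}\to Z(\cB')$ and then use the bicommutant hypothesis to upgrade it to an equivalence. On objects, send a half-braided object $(b,\sigma_b)\in Z(\cB)$ to the object of $Z(\cB')$ whose underlying $\cB'$-object is $(\alpha(b),e_{\alpha(b)})$ — with inner half-braiding $e_{\alpha(b),c}:=\mu_{c,b}^{-1}\circ\alpha(\sigma_{b,c})\circ\mu_{b,c}$ transported through the tensorators $\mu$ of $\alpha$, which is exactly the Canonical functor of the statement — and whose \emph{center} half-braiding is
$$\tau^b_{(Y,e_Y)}:=e_{Y,b}^{-1}\colon \alpha(b)\boxtimes Y\to Y\boxtimes\alpha(b),$$
i.e.\ the half-braiding of $\Phi(b,\sigma_b)$ past an object of $\cB'$ is supplied by \emph{that object's own} half-braiding $e_Y$. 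The conceptual content is that an object of $\cB'$ already knows how to commute past everything in $\alpha(\cB)$, so objects coming from $\cB$ automatically acquire a center structure in $\cB'$. On morphisms $\Phi$ is simply $f\mapsto\alpha(f)$.

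First I would check $\Phi$ is well defined: that each $\tau^b_{(Y,e_Y)}$ is a morphism in $\cB'$ (i.e.\ commutes with $e_{\alpha(b)}$ and $e_Y$) and that the family $\{\tau^b_{(Y,e_Y)}\}$ satisfies the hexagon and naturality axioms of a half-braiding. Each of these reduces directly to an axiom already satisfied by the half-braidings $e_Y$ of objects of $\cB'$: the hexagon for $\tau^b$ is the hexagon for $e_Y$ evaluated at tensor products, naturality of $\tau^b$ in $(Y,e_Y)$ is the defining intertwining property of morphisms in $\cB'$, and monoidality of $\Phi$ (the comparison $\tau^{a\otimes b}\cong\tau^a\otimes\tau^b$) is again the hexagon for the various $e_Y$. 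The triangle in \eqref{eq:Z(C')=Z(C)rev commutes} then commutes on the nose, since $\operatorname{Forget}\circ\Phi$ discards $\tau^b$ and returns $(\alpha(b),e_{\alpha(b)})$, which is the Canonical functor by definition.

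Next I would verify that $\Phi$ is braided precisely when the source carries the \emph{reversed} braiding, which is the reason for the $\rev$. By definition of the center braiding, the braiding of $\Phi(a,\sigma_a)$ with $\Phi(b,\sigma_b)$ in $Z(\cB')$ is the value of $\tau^a$ at the $\cB'$-object underlying $\Phi(b,\sigma_b)$, namely $\tau^a_{(\alpha(b),e_{\alpha(b)})}=e_{\alpha(b),a}^{-1}=\alpha(\sigma_{b,a})^{-1}$ up to tensorators. On the other hand the reversed braiding of $Z(\cB)^{\rev}$ is $\sigma_{b,a}^{-1}$, and $\alpha(\sigma_{b,a}^{-1})=\alpha(\sigma_{b,a})^{-1}$. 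Thus $\Phi$ intertwines the reversed braiding on $Z(\cB)$ with the braiding on $Z(\cB')$; had we not reversed, we would instead be comparing $\sigma_{a,b}$ with $\sigma_{b,a}^{-1}$, which disagree in general.

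Finally, and this is where the bicommutant hypothesis is essential, I would prove $\Phi$ is an equivalence. The key observation is that, alongside $\operatorname{Forget}\colon Z(\cB')\to\cB'$, there is a \emph{transposed} forgetful functor $U\colon Z(\cB')\to(\cB')'=\cB''$ that discards the inner half-braiding $e_X$ and reinterprets the center half-braiding $\tau$ as a half-braiding of the bimodule $X$ against all of $\alpha'(\cB')$. By construction $U\circ\Phi=G\circ U_{\cB}$, where $U_{\cB}\colon Z(\cB)\to\cB$ is the ordinary forgetful functor and $G\colon\cB\xrightarrow{\cong}\cB''$ is the canonical equivalence provided by the bicommutant hypothesis. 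Since $G$ is an equivalence and the forgetful functors $U,U_{\cB}$ are faithful, faithfulness and fullness of $\Phi$ follow by translating the two compatibility conditions on a $Z(\cB')$-morphism (commuting with the $e$'s and with the $\tau$'s) into the single condition of being a $Z(\cB)$-morphism. The main obstacle is \emph{essential surjectivity}: given $((X,e_X),\tau)\in Z(\cB')$, applying $U$ and $G^{-1}$ produces $b\in\cB$ with $X\cong\alpha(b)$ and $\tau\cong\{e_{Y,b}^{-1}\}$, but one must still show the inner half-braiding $e_X$ descends along $\alpha$ to a half-braiding $\sigma_b$ on $b$ in $\cB$ — equivalently, that each $e_{X,c}$, viewed as a bimodule map $\alpha(b)\boxtimes\alpha(c)\to\alpha(c)\boxtimes\alpha(b)$, commutes with all the $\cB'$-half-braidings and hence, by $\cB\cong\cB''$, lies in the image of $\alpha$. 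This commutation is the technical crux: it follows from combining the naturality and hexagon axioms of $e_X$ as a $\cB'$-object with the fact that the components $\tau_Y=e_{Y,b}^{-1}$ are themselves $\cB'$-morphisms and therefore already intertwine $e_X$ with the $e_Y$. Once $\sigma_b$ is produced, its own hexagon and naturality follow from those of $e_X$ by faithfulness of $\alpha$ on $\cB\cong\cB''$.
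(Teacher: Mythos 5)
Your construction of the functor is exactly the paper's: $(b,\sigma_b)\mapsto\big((\alpha(b),\alpha(\sigma_b)),\{e_{Y,b}^{-1}\}_{(Y,e_Y)\in\cB'}\big)$, and your checks of monoidality, braidedness (with the $\rev$ accounting for $\sigma_{b,a}^{-1}$ versus $\sigma_{a,b}$), and commutativity of the triangle are the checks the paper asserts. Where you genuinely diverge is in proving that this functor is an equivalence. The paper's argument is formal: applying the same construction to $\cB'$ and $\cB''$ gives a chain $Z(\cB)^{\rev}\to Z(\cB')\to Z(\cB'')^{\rev}\to Z(\cB''')$ in which the two consecutive composites are identified with the functors induced by the equivalences $\cB\to\cB''$ and $\cB'\to\cB'''$, so all three functors in the chain are equivalences by the two-out-of-six property; note that this uses not only $\cB\simeq\cB''$ but also that commutant categories are themselves bicommutant, i.e.\ $\cB'\simeq\cB'''$. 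Your argument replaces this with the transposed forgetful functor $U\colon Z(\cB')\to\cB''$ --- which is in fact precisely the paper's second-stage functor followed by $\Forget\colon Z(\cB'')^{\rev}\to\cB''$ --- together with the identity $U\circ\Phi=G\circ U_{\cB}$, and then verifies fully faithfulness and essential surjectivity directly from the single hypothesis that $G\colon\cB\to\cB''$ is an equivalence. Your identification of the crux is correct: the commutation needed to descend $e_{X,c}$ along $G$ to a morphism $\sigma_{b,c}\colon b\otimes c\to c\otimes b$ in $\cB$ is, after expanding the tensor-product half-braidings of $\cB'$ and $\cB''$, literally the same equation as the statement that each component $\tau_{(Y,e_Y)}=e_{Y,b}^{-1}$ is a morphism in $\cB'$, which is part of the data of an object of $Z(\cB')$; fullness of $G$ then produces $\sigma_{b,c}$, and faithfulness of $\alpha$ gives its hexagon, naturality, and invertibility, exactly as you say. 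The trade-off: the paper's proof is shorter but leans on $\cB'\simeq\cB'''$ and on identifying composites with induced functors; yours is more laborious but self-contained, uses only $\cB\simeq\cB''$, and makes explicit the essential-surjectivity content that the formal argument conceals.
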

\begin{proof}
To construct a functor $Z(\cB)^{\rev}\to Z(\cB')$, an object $$\underline X:=(X,e_X=\{e_{X,Y}:X\otimes Y\to Y\otimes X\}_{Y\in \cC})$$ in $Z(\cB)^{\rev}$ maps to $$\underline {\underline X} = (\underline X,e_{\underline X}) = ((X,e_X),e_{\underline X})$$
in $Z(\cB')$, where for $\underline Y:=(Y,e_Y)\in\cB'$ we have $e_{\underline X,\underline Y}=e_{Y,X}^{-1}$.
This is evidently a strict tensor functor.
It is a braided tensor functor because the braiding on $Z(\cB^{\rev})$ is given by
$$
{\beta^{Z(\cB)^{\rev}}}_{\underline X,\underline Y}=
({\beta^{Z(\cB)}}_{\underline Y,\underline X})^{-1} = e_{Y,X}^{-1}
$$
which agrees with the braiding $Z(\cB')$:
$$
\beta^{\cB'}_{\underline{\underline X},\underline{\underline Y}}= e_{\underline X,\underline Y}=e_{Y,X}^{-1}.
$$
The diagram \eqref{eq:Z(C')=Z(C)rev commutes} visibly commutes. It remains to show the top arrow is an equivalence.

The same construction replacing $\cB$ with $\cB'$ gives a strict braided tensor functor $Z(\cB')^{\rev} \to Z(\cB'')$.
Hence we also get a strict braided tensor functor $Z(\cB')\to Z(\cB'')^{\rev}$ by taking the reverse braiding on each side.
The composite map $Z(\cB)^{\rev} \to Z(\cB')\to Z(\cB'')^{\rev}$ is the map induced from the equivalence $\cB\to \cB''$.
Again, replacing $\cB$ with $\cB'$, the composite map $Z(\cB')\to Z(\cB'')^{\rev} \to Z(\cB''')$ is also the map induced from the equivalence $\cB'\to \cB'''$.
This completes the proof.
\end{proof}

We now prove some useful results in the case that $\cC$ is a unitary fusion category fully faithfully embedded in $\Bim(R)$, for a von Neumann factor $R$ not of type $\rm I$.
In \cite{MR3663592}, we proved that both $\cC''=\Hilb(\cC)=\cC\otimes_{\fdHilb} \Hilb$ and $\cC'$ are bicommutant categories.

\begin{lem}
\label{lem:ZHilbC=HilbZC}
If $\cC$ is a unitary fusion category, then
$ Z(\Hilb(\cC))\cong \Hilb(Z(\cC))$.
\end{lem}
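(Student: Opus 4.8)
The plan is to construct a canonical unitary braided tensor functor $F\colon \Hilb(Z(\cC))\to Z(\Hilb(\cC))$ and to show it is fully faithful and essentially surjective. The starting observation is that the inclusion $\cC\hookrightarrow \Hilb(\cC)=\cC\otimes_{\fdHilb}\Hilb$ exhibits $\Hilb(\cC)$ as the completion of $\cC$ under (possibly infinite) orthogonal direct sums, and that the tensor product of $\Hilb(\cC)$ is additive in each variable. Consequently a half-braiding of an object over the generating subcategory $\cC$ extends uniquely to a half-braiding over all of $\Hilb(\cC)$: given $(c,e_c=\{e_{c,a}\}_{a\in\cC})\in Z(\cC)$ and an object $Y=\bigoplus_\alpha a_\alpha\in\Hilb(\cC)$ with $a_\alpha\in\cC$, I set $e_{c,Y}:=\bigoplus_\alpha e_{c,a_\alpha}$, which is unitary and, by additivity together with the hexagon relation over $\cC$, satisfies the hexagon relation over $\Hilb(\cC)$. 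This defines $F$ on $Z(\cC)$; since $Z(\cC)$ is semisimple I extend $F$ to all of $\Hilb(Z(\cC))$ by sending $\bigoplus_i z_i\otimes H_i$ to the corresponding orthogonal direct sum in $Z(\Hilb(\cC))$. By construction $F$ is a strict unitary tensor functor, and it is braided because in both centers the braiding is the half-braiding, and these agree by the definition of $e_{c,Y}$.

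Next I would verify that $F$ is fully faithful. A morphism $F(\Xi)\to F(\Xi')$ in $Z(\Hilb(\cC))$ is a morphism $f$ of the underlying objects in $\Hilb(\cC)$ commuting with the half-braidings $e_{F(\Xi),Y}$ and $e_{F(\Xi'),Y}$ for every $Y\in\Hilb(\cC)$. Because these half-braidings are the direct-sum extensions of the half-braidings over $\cC$, commutation for all $Y\in\Hilb(\cC)$ is equivalent, by additivity, to commutation for all $a\in\cC$; and this is exactly the condition defining a morphism in $\Hilb(Z(\cC))$. Hence $F$ is a unitary isomorphism on all Hom-spaces.

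Essential surjectivity is the main obstacle, since an object of $\Hilb(Z(\cC))$ has underlying $\cC$-object a direct sum of the \emph{finite} objects $\Forget(z_i)$, whereas a general $(X,e_X)\in Z(\Hilb(\cC))$ has $X$ an arbitrary (infinite) direct sum of simples whose half-braiding need not respect that decomposition. To handle this I would use induction. The forgetful functor $Z(\Hilb(\cC))\to\Hilb(\cC)$ admits a unitary adjoint $\widetilde I(Y)=\bigoplus_{a\in\Irr(\cC)} a\otimes Y\otimes\overline a$, equipped with its canonical half-braiding, exactly as for the fusion category $\cC$; and, as in the fusion case, the adjunction unit admits a splitting built from the half-braiding, realizing $(X,e_X)$ as an orthogonal direct summand of $\widetilde I(X)$ in $Z(\Hilb(\cC))$. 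Now the key finiteness input enters: because $\Irr(\cC)$ is finite, for a simple $a_0\in\cC$ the object $\widetilde I(a_0)=\bigoplus_{a\in\Irr(\cC)} a\otimes a_0\otimes \overline a$ is a finite object of $\Hilb(\cC)$ carrying a half-braiding, i.e.\ it is the image under $F$ of an object of $Z(\cC)\subset\Hilb(Z(\cC))$. Writing $X=\bigoplus_\alpha a_\alpha$ with $a_\alpha$ simple, we get $\widetilde I(X)\cong\bigoplus_\alpha \widetilde I(a_\alpha)\in\operatorname{image}(F)$.

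Finally I would conclude as follows. Since $\Hilb(Z(\cC))$ is a $W^*$-category it is idempotent complete, and $F$ is fully faithful, so the essential image of $F$ is closed under orthogonal direct sums and under taking summands. As $(X,e_X)$ is a summand of $\widetilde I(X)$, which lies in the essential image, it follows that $(X,e_X)$ is itself in the essential image. This proves essential surjectivity, and together with the previous steps shows that $F$ is a unitary braided equivalence $\Hilb(Z(\cC))\cong Z(\Hilb(\cC))$, as claimed. The one point needing care is the construction of the induction adjoint and the splitting of its unit in the possibly infinite-dimensional $W^*$-setting; I expect this to reduce to the corresponding finite statement for $\cC$ by additivity, since $\widetilde I$ and the relevant structure maps are all built summand-by-summand over the finite set $\Irr(\cC)$.
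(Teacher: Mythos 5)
Your proof is correct and follows essentially the same route as the paper: the canonical functor $\Hilb(Z(\cC))\to Z(\Hilb(\cC))$ is fully faithful (the paper calls this ``visible''), and essential surjectivity is obtained exactly as you do, by showing every $(X,e_X)\in Z(\Hilb(\cC))$ is a direct summand of the induced object $\widetilde{I}(X)=\Hilb(I)(X)$, which lies in the image of $\Hilb(Z(\cC))$. The additional detail you supply --- extension of half-braidings by additivity, the splitting of the adjunction unit, and the use of finiteness of $\Irr(\cC)$ --- is precisely what the paper's terse proof leaves implicit.
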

\begin{proof}
The canonical functor $\Hilb(Z(\cC)) \to Z_\cC(\Hilb(\cC))=Z(\Hilb(\cC))$ is visibly fully faithful, but it is not obvious that it is essentially surjective. 

The induction functor $I:\cC \to Z(\cC)$ (adjoint to the forgetful functor) is such that every object $(X,e_X)\in Z(\cC)$ is a direct summand of $I(X)$.
The same property holds true for the corresponding functor $\Hilb(I):\Hilb(\cC) \to \Hilb(Z(\cC))$.
Every $(X,e_X)\in Z(\Hilb(\cC))$ is a direct summand of $\Hilb(I)(X)\in \Hilb(Z(\cC))$, hence lives in $\Hilb(Z(\cC))$.
\end{proof}

\begin{prop}
The functors $Z(\cC)\to \cC'$ and $Z(\cC')\to \cC'$
are fully faithful.
\end{prop}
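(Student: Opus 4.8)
The plan is to deduce both statements from a single observation: whenever a bi-involutive tensor category $\cB$ is \emph{fully faithfully} represented in $\Bim(R)$ by some $\beta\colon\cB\to\Bim(R)$, the canonical functor $Z(\cB)\to\cB'$ (center into commutant, in the sense of Definition~\ref{def: C'}) is fully faithful. This functor sends an object $(X,e_X)\in Z(\cB)$ — that is, $X\in\cB$ together with a half-braiding against all of $\cB$ — to the underlying bimodule $\beta(X)\in\Bim(R)$, now regarded as an object of $\cB'$ via the transported half-braiding against $\beta(\cB)$. To verify full faithfulness, observe that a morphism in $Z(\cB)$ between $(X,e_X)$ and $(Y,e_Y)$ is a $\cB$-morphism $X\to Y$ commuting with the half-braidings, while a morphism between their images in $\cB'$ is a bimodule map $\beta(X)\to\beta(Y)$ commuting with those same half-braidings; since $\beta$ is fully faithful and monoidal, one has $\Hom_\cB(X,Y)=\Hom_{\Bim(R)}(\beta(X),\beta(Y))$, and the half-braiding compatibility conditions match term by term. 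Hence the two Hom-spaces coincide.

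For $Z(\cC)\to\cC'$, I would apply this observation directly with $\cB=\cC$: by hypothesis $\alpha\colon\cC\to\Bim(R)$ is fully faithful, and the functor in the statement is precisely this canonical functor, so it is fully faithful.

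For $Z(\cC')\to\cC'$, which is the forgetful functor out of the Drinfeld center, faithfulness is automatic, so the real content is fullness. First I would record that $\cC''=\Hilb(\cC)$ is fully faithfully represented in $\Bim(R)$: since $\alpha$ is fully faithful on $\cC$ and $\Bim(R)$ has complete (Banach) Hom-spaces, the Hilbertification $\Hilb(\cC)\to\Bim(R)$ remains fully faithful (the Hom-space of $\bigoplus_i H_i\otimes c_i$ into $\bigoplus_j K_j\otimes d_j$ is $\bigoplus_{i,j}B(H_i,K_j)\otimes\Hom(c_i,d_j)$ on both sides). The observation above then shows that the canonical functor $Z(\cC'')\to(\cC'')'=\cC'$ is fully faithful. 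Finally I would invoke Theorem~\ref{thm:Z(C')=Z(C)rev} with $\cB=\cC''$ (which is a bicommutant category): its commuting triangle, via the braided equivalence $Z(\cC'')^{\rev}\cong Z((\cC'')')=Z(\cC''')$ and the bicommutant identification $\cC'''\cong\cC'$, identifies the forgetful functor $Z(\cC')\to\cC'$ with the canonical functor $Z(\cC'')\to\cC'$ just shown to be fully faithful. As equivalences are fully faithful and composites of fully faithful functors are fully faithful, the forgetful functor $Z(\cC')\to\cC'$ is fully faithful.

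I expect the main obstacle to be the bookkeeping in this last part, namely confirming that the forgetful functor $Z(\cC')\to\cC'$ really is the ``Forget'' edge of the triangle in Theorem~\ref{thm:Z(C')=Z(C)rev} applied to $\cB=\cC''$ — this requires correctly unwinding $(\cC'')'=\cC'''$ together with the bicommutant equivalence $\cC'''\cong\cC'$, and checking that the canonical functor $Z(\cC'')\to\cC'$ is indeed the corresponding ``Canonical'' edge. Verifying full faithfulness of the Hilbertified representation $\Hilb(\cC)\to\Bim(R)$ is routine but should be stated with care, since it is the one point where we go beyond the fully faithful hypothesis on $\cC$ itself.
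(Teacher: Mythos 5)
Your proposal is correct, and its overall architecture matches the paper's: first a direct Hom-space comparison showing $Z(\cC)\to\cC'$ is fully faithful, then a reduction of the statement about $Z(\cC')\to\cC'$ to full faithfulness of the ``Canonical'' arrow of Theorem~\ref{thm:Z(C')=Z(C)rev} applied to $\cB=\Hilb(\cC)=\cC''$, using that the top arrow of that triangle is an equivalence. The one genuine difference is how that Canonical arrow $Z(\Hilb(\cC))\to\cC'$ is shown to be fully faithful. The paper bootstraps from the first part via Lemma~\ref{lem:ZHilbC=HilbZC}, i.e.\ the identification $Z(\Hilb(\cC))\cong\Hilb(Z(\cC))$, whose essential-surjectivity half is the nontrivial point (proved there using the induction functor); full faithfulness of $\Hilb(Z(\cC))\to\cC'$ then follows by Hilbertifying the first part. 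You instead promote the first part's argument to a general observation --- for any $\cB$ fully faithfully represented in $\Bim(R)$, the canonical functor $Z(\cB)\to\cB'$ is fully faithful --- and apply it directly to $\cB=\Hilb(\cC)$. This sidesteps Lemma~\ref{lem:ZHilbC=HilbZC} entirely, since your argument is purely about Hom-spaces and needs no control over the objects of $Z(\Hilb(\cC))$, at the cost of the auxiliary fact that $\Hilb(\cC)\hookrightarrow\Bim(R)$ remains fully faithful, which, as you say, is routine (and is in any case implicit in the identification $\cC''\cong\Hilb(\cC)$ from the prior work the paper cites). The remaining bookkeeping you flag --- identifying $(\cC'')'=\cC'''$ with $\cC'$ compatibly with the forgetful functors --- is also present, implicitly, in the paper's proof (it uses $\Hilb(\cC)'\simeq\cC'$ without comment), so it is not an extra burden specific to your route.
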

\begin{proof}
We first show that $Z(\cC)\to \cC'$ is fully faithful.
Let $(X,e_X), (Y,e_Y)\in Z(\cC)$.
Every morphism $(X,e_X) \to (Y,e_Y)$ in $\cC'$ is a morphism $X\to Y$ in $\cC\subset\Bim(R)$ compatible with the half-braidings.
This is exactly the definition of a morphism in $Z(\cC)$.

By Lemma \ref{lem:ZHilbC=HilbZC}, it follows that $\Hilb(Z(\cC))\cong Z(\Hilb(\cC))\to \cC'$ is also fully faithful.
Using that $\Hilb(\cC)$ is a bicommutant category,
the result now follows by commutativity of \eqref{eq:Z(C')=Z(C)rev commutes} for $\cB=\Hilb(\cC)$.
\end{proof}

We will need the following corollary in our construction later on.

\begin{cor}
\label{cor:M is a Z(C)rev module tensor cat}
Suppose $\cM\subset \cC'$ is a full tensor subcategory containing the image of $Z(\cC)$ in $\cC'$.
Then there is a fully faithful braided tensor functor $Z(\cC)^{\rev}\to Z(\cM)$ such that the following diagram commutes:
$$
\begin{tikzcd}
Z(\cC)^{\rev} 
\arrow[rr,hookrightarrow,""]
\arrow[dr]
&&
Z(\cM)
\arrow[dl]
\\
& \cM
\end{tikzcd}
$$
\end{cor}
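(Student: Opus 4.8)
The plan is to obtain the functor by restricting the braided equivalence of Theorem~\ref{thm:Z(C')=Z(C)rev} to $Z(\cC)^{\rev}$ and then cutting down the target from $Z(\cC')$ to $Z(\cM)$, and afterwards to check full faithfulness by hand, since the cutting-down step is not by itself enough to control morphisms. First I would apply Theorem~\ref{thm:Z(C')=Z(C)rev} to the bicommutant category $\cB=\Hilb(\cC)$, whose commutant is again $\cC'$ (as was already used in the proof of the previous proposition). Combined with Lemma~\ref{lem:ZHilbC=HilbZC}, which gives $Z(\Hilb(\cC))\cong\Hilb(Z(\cC))$, this yields a braided tensor equivalence $G\colon \Hilb(Z(\cC))^{\rev}\xrightarrow{\cong} Z(\cC')$ whose composite with the forgetful functor to $\cC'$ is the canonical functor. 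On objects, $G$ is the formula from the proof of Theorem~\ref{thm:Z(C')=Z(C)rev}: an object $(X,e_X)$ is sent to $X\in\cC'$ equipped with the half-braiding $e_{\underline X,\underline Y}=e_{Y,X}^{-1}$ against each $\underline Y=(Y,e_Y)\in\cC'$. Precomposing with the inclusion $Z(\cC)^{\rev}\hookrightarrow\Hilb(Z(\cC))^{\rev}$ gives a fully faithful braided tensor functor $Z(\cC)^{\rev}\to Z(\cC')$, and by commutativity of \eqref{eq:Z(C')=Z(C)rev commutes} the underlying $\cC'$-object of $G(X,e_X)$ is exactly the image of $(X,e_X)$ under the canonical functor $Z(\cC)\to\cC'$, which lies in $\cM$ by hypothesis.

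Next I would observe that, because $\cM$ is a full tensor subcategory of $\cC'$, there is a braided tensor functor from the full subcategory of $Z(\cC')$ on objects whose underlying $\cC'$-object lies in $\cM$ to $Z(\cM)$, given by restricting each half-braiding family $\{e_{Z,W}\}_{W\in\cC'}$ to the subfamily indexed by $W\in\cM$. This is well defined since for $Z,W\in\cM$ the objects $Z\otimes W$ and $W\otimes Z$ again lie in $\cM$, so each $e_{Z,W}$ is a morphism in $\cM$, and the hexagon and naturality conditions simply restrict. Composing $G|_{Z(\cC)^{\rev}}$ with this restriction yields the desired braided tensor functor $F\colon Z(\cC)^{\rev}\to Z(\cM)$. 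Commutativity of the stated triangle is then immediate: both forgetful functors to $\cM$ are the restrictions along $\cM\hookrightarrow\cC'$ of the corresponding functors to $\cC'$, and the triangle in \eqref{eq:Z(C')=Z(C)rev commutes} commutes.

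The hard part will be full faithfulness of $F$, because the half-braiding-restriction functor $Z(\cC')\to Z(\cM)$ is faithful but in general \emph{not} full: a morphism in $Z(\cM)$ need only intertwine the $\cM$-indexed half-braidings, not the full $\cC'$-indexed families, so I cannot simply transport full faithfulness across the equivalence $G$. Instead I would argue directly. Faithfulness is clear, as $F$ is the identity on underlying bimodule maps. For fullness, take $\underline X=(X,e_X)$ and $\underline X'=(X',e_{X'})$ in $Z(\cC)^{\rev}$ and a morphism $g\colon F\underline X\to F\underline X'$ in $Z(\cM)$. Forgetting the $\cM$-half-braiding, $g$ is a morphism $X\to X'$ in $\cM$, hence in $\cC'$, between two objects in the image of the canonical functor $Z(\cC)\to\cC'$. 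Since that functor is fully faithful by the previous proposition, $g$ automatically commutes with the $\cC$-half-braidings $e_X$ and $e_{X'}$, i.e.\ $g$ is already a morphism in $Z(\cC)=Z(\cC)^{\rev}$ with $F(g)=g$. This exhibits $F$ as full and completes the argument; the only genuinely nonformal input is the previous proposition, which converts the a priori weaker intertwining condition in $Z(\cM)$ into the stronger one in $Z(\cC)$.
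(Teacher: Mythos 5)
Your construction of the functor coincides with the paper's: both apply Theorem~\ref{thm:Z(C')=Z(C)rev} with $\cB=\Hilb(\cC)$ (so that $\cB'=\cC'$), combine it with Lemma~\ref{lem:ZHilbC=HilbZC} to obtain $Z(\cC)^{\rev}\hookrightarrow Z(\Hilb(\cC))^{\rev}\cong Z(\cC')$, and then restrict half-braidings from $\cC'$-indexed families to $\cM$-indexed ones, noting that this lands in $Z(\cM)$ and makes the triangle commute. Where you genuinely diverge is full faithfulness. The paper disposes of it in one line: $Z(\cC)^{\rev}$ is modular, and a braided tensor functor out of a modular category is automatically fully faithful \cite[Cor.~3.26]{MR3039775}; this works for any braided functor out of $Z(\cC)^{\rev}$, independent of how it was built. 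You instead argue directly: since $\cM\subset\cC'$ is full, a morphism of $Z(\cM)$ between objects in the image of $F$ is in particular a morphism of $\cC'$ between objects in the image of the canonical functor $Z(\cC)\to\cC'$, which is fully faithful by the preceding Proposition, so it is the image of a unique morphism of $Z(\cC)$; hence $F$ is full, and faithfulness is clear. This is sound, and your motivating observation --- that the restriction functor $Z(\cC')\to Z(\cM)$ is faithful but in general not full, so that full faithfulness cannot simply be transported across the equivalence --- is exactly why some extra input is required; the paper's input is modularity, yours is the Proposition together with fullness of $\cM$ in $\cC'$. Your route has the merit of making the role of the hypotheses explicit and of not invoking non-degeneracy of $Z(\cC)$ at all, so it would survive in settings where the acting braided category is merely known to embed fully faithfully into $\cC'$; the paper's route is shorter and more robust in a different direction, since it needs nothing about the target beyond receiving a braided functor. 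One phrasing correction: full faithfulness of $Z(\cC)\to\cC'$ is not what shows that $g$ ``commutes with the $\cC$-half-braidings'' --- that commutation holds by definition, because $g$ is a morphism of $\cC'$; what the Proposition actually supplies is that such a map, a priori only a bimodule map intertwining half-braidings, is a morphism of $\cC$, hence defines a morphism of $Z(\cC)$ mapping to $g$ under $F$.
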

\begin{proof}
The image of $Z(\cC)$ in $\cC'$ lifts to a commutative diagram
\[
\begin{tikzcd}
Z(\cC)^{\rev} 
\arrow[r,hookrightarrow,""]
\arrow[dr]
&
Z(\Hilb(\cC))^{\rev}
\arrow[r,"\cong"]
\arrow[d]
&
Z(\cC')
\arrow[dl]
\\
&\cC'
\end{tikzcd}
\]
in which the horizontal arrows are braided tensor functors.
The image of $Z(\cC)^{\rev}\hookrightarrow Z(\cC')$ lies in $Z(\cM)$
because a half-braiding with $\cC'$ resticts to a half-braiding with $\cM$.

Finally, the braided tensor functor $Z(\cC)^{\rev}\to Z(\cM)$ is automatically fully faithful as $Z(\cC)^{\rev}$ is modular \cite[Cor.~3.26]{MR3039775}.
\end{proof}

\section{Relative tensor product of module tensor categories}
\label{sec:BalancedTensor}

In this section, we analyze the relative tensor product $\cC\boxtimes_\cV \cD$ of a $\cV$-module tensor category $(\cD,\Phi^Z:\cV\to \cD)$ and a $\cV^{\rev}$-module tensor category $(\cC,\Psi^Z:\cV\to \cC)$.
Here, $\Psi^Z$ is a reverse-braided functor, meaning that it satisfies $\Psi^Z(\beta_{u,v})=\beta^{-1}_{\Psi^Z(v),\Psi^Z(u)}$ for $u,v\in\cV$.
We shall use the notational convention $\Phi := \Forget\circ \Phi^Z$ and $\Psi := \Forget\circ \Psi^Z$.
Throughout this section, we assume $\cV$ is semisimple with simple unit object.

The monoidal category $\cC\boxtimes_\cV \cD$ is defined via the universal property which states that for every tensor category $\cE$, the data of
a $\cV$-balanced tensor functor 
$B:\cC \boxtimes \cD \to \cE$
is equivalent to the data of a tensor functor
$
B':\cC\boxtimes_\cV\cD \to \cE,
$ 
via the commutative diagram
$$
\begin{tikzcd}
\cC\boxtimes \cD
\arrow[dr,"B"']
\arrow[rr,"-\boxtimes_\cV-"]
&&
\cC\boxtimes_\cV \cD
\arrow[dl,"B'"]
\\
&
\cE
\end{tikzcd}.
$$
Here, a tensor functor $\cC \boxtimes \cD \to \cE$ is called $\cV$-\emph{balanced} if it comes with monoidal natural isomorphisms 
$$
\eta_{c,v,d}:B\big((c \otimes\Psi v) \boxtimes d\big) \to B\big(c\boxtimes (\Phi v\otimes d\big))
$$
for $c\in \cC$, $d\in \cD$, and $v\in \cV$,
satisfying the coherence which states that passing $v_1\otimes v_2$ from one side to the other is the same as first passing $v_2$ and then passing $v_1$.
Note that the monoidal coherence involves the two half-braidings (the bottom vertical arrows):
\begin{equation}
\label{eq:MonoidalBalancingAxiom}
\begin{tikzcd}
B((c_1\otimes \Psi v_1)\boxtimes d_1) \otimes B((c_2 \otimes \Psi v_2)\boxtimes d_2)
\arrow[d]
\arrow[r,"\eta\otimes \eta"]
&
B(c_1\boxtimes (\Phi v_1\otimes d_1)) \otimes B(c_2 \boxtimes (\Phi v_2\otimes d_2))
\arrow[d]
\\
B((c_1\otimes \Psi v_1)\boxtimes d_1) \otimes ((c_2 \otimes \Psi v_2)\boxtimes d_2)
\arrow[d,equals]
&
B(c_1\boxtimes (\Phi v_1\otimes d_1)\otimes (c_2 \boxtimes (\Phi v_2\otimes d_2))
\arrow[d,equals]
\\
B((c_1\otimes \Psi v_1 \otimes c_2 \otimes \Psi v_2)\boxtimes (d_1\otimes d_2))
\arrow[d]
&
B((c_1\otimes c_2)\boxtimes (\Phi v_1\otimes d_1\otimes \Phi v_2\otimes d_2))
\arrow[d]
\\
B((c_1\otimes c_2 \otimes \Psi v_1 \otimes \Psi v_2)\boxtimes (d_1\otimes d_2))
\arrow[r, "\eta"]
&
B((c_1\otimes c_2)\boxtimes (\Phi v_1\otimes \Phi v_2\otimes d_1\otimes d_2))
\end{tikzcd}
\end{equation}
We write $\zeta_v$ for the isomorphism $\eta_{1,v,1}:\Phi v \boxtimes_\cV 1_\cD\to 1_\cC\boxtimes_\cV \Phi v$.

\begin{rem}\label{rem:1Composition}
When the braided category $\cV$ is fusion and when $\cC$ and $\cD$ are multifusion, then
the balanced tensor product $\cC\boxtimes_\cV \cD$ agrees with the composition of $1$-morphisms
$
\Vec \xrightarrow{\cC} \cV \xrightarrow{\cD} \Vec
:=
\cC\boxtimes_\cV \cD
$
in the $4$-category of braided fusion categories \cite{MR3650080,MR3590516,MR4228258,MR4498161}.
\end{rem}

\subsection{The ladder category model}

One model for $\cC\boxtimes_\cV \cD$ is the Cauchy completion of the \emph{ladder category} \cite{MR2978449,MR3975865}\footnote{The ladder category construction is the analog of the balanced tensor product for non-Cauchy complete categories.}, which we denote by $\cC\boxdot_\cV \cD$.
The tensor category $\cC\boxdot_\cV \cD$ is known to satisfy the universal property for $\cC\boxtimes_\cV\cD$ with respect to ordinary (non-monoidal) $\cV$-balanced functors from $\cC\boxtimes \cD$, and thus satisfies the universal property for $\cC\boxtimes_\cV\cD$ with respect to $\cV$-balanced \emph{tensor} functors from $\cC\boxtimes \cD$ by \cite[Prop 3.2.3.1.(4)]{LurieHigherAlgebra}.

\begin{defn}
Objects in the ladder category are of the form $a\boxdot d$ where $a\in \cC$ and $d\in \cD$, and
the morphism spaces are given by
$$
(\cC\boxdot_\cV \cD)\big(a\boxdot c\to b\boxdot d\big)
:=
\bigoplus_{c\in \Irr(\cV)} \cC\big(a\to b\otimes \Psi v\big) \otimes \cD\big(\Phi v\otimes c \to d\big).
$$
We refer the reader to \cite[Def.~7]{MR3975865} for the remainder of the definition as a linear category and to  \cite[\S3.2]{mitchell-thesis} for the monoidal structure.
\end{defn}

\begin{lem}
\label{lem:CanonicalInclusionsFF}
Consider the canonical functors
\begin{align*}
F: \cC\to \cC\boxtimes 1_\cD \subset \cC\boxtimes \cD &\xrightarrow{-\boxtimes_\cV-} \cC\boxtimes_\cV \cD
\\
G: \cD\to 1_\cC\boxtimes \cD \subset \cC\boxtimes \cD &\xrightarrow{-\boxtimes_\cV-} \cC\boxtimes_\cV \cD.
\end{align*}
\begin{enumerate}[label=(\arabic*)]
\item 
If $\Phi: \cV\to \cD$ is fully faithful
, then $F:\cC\to \cC\boxtimes_\cV \cD$ is fully faithful.
\item 
If $\Psi: \cV\to \cC$ is fully faithful
, then $G:\cD\to \cC\boxtimes_\cV \cD$ is fully faithful.
\end{enumerate}
\end{lem}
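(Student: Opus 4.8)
The plan is to work entirely inside the ladder model $\cC\boxdot_\cV\cD$, whose Cauchy completion is $\cC\boxtimes_\cV\cD$. Since passing to the Cauchy completion is a fully faithful embedding, and under this identification $F$ is the functor $a\mapsto a\boxdot 1_\cD$ while $G$ is $d\mapsto 1_\cC\boxdot d$, it suffices to prove full faithfulness of these two functors on the ladder category. Everything then reduces to the explicit $\Irr(\cV)$-graded morphism formula recalled in the definition above, together with the facts that $\Phi$ and $\Psi$ are tensor functors (so $\Phi 1_\cV\cong 1_\cD$ and $\Psi 1_\cV\cong 1_\cC$) and that $\cV$ is semisimple with simple unit, so that $\cV(v\to 1_\cV)$ and $\cV(1_\cV\to v)$ vanish for every $v\in\Irr(\cV)$ with $v\not\cong 1_\cV$ and are one-dimensional when $v\cong 1_\cV$.

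For part (1), I would compute $(\cC\boxdot_\cV \cD)(a\boxdot 1_\cD\to b\boxdot 1_\cD)=\bigoplus_{v\in\Irr(\cV)}\cC(a\to b\otimes\Psi v)\otimes \cD(\Phi v\otimes 1_\cD\to 1_\cD)$. The right-hand tensor factor is $\cD(\Phi v\to 1_\cD)\cong\cD(\Phi v\to\Phi 1_\cV)$, and here is where the hypothesis enters: full faithfulness of $\Phi$ identifies this with $\cV(v\to 1_\cV)$. By semisimplicity and simplicity of the unit this is zero unless $v\cong 1_\cV$, so only the $v=1_\cV$ summand survives, leaving $\cC(a\to b\otimes\Psi 1_\cV)\otimes\C\cong\cC(a\to b)$ after the unit identification $b\otimes 1_\cC\cong b$. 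Thus the induced map $\cC(a\to b)\to(\cC\boxdot_\cV\cD)(Fa\to Fb)$, namely $f\mapsto f\boxtimes_\cV\id_{1_\cD}$, is a bijection.

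Part (2) is the mirror argument: in $(\cC\boxdot_\cV\cD)(1_\cC\boxdot c\to 1_\cC\boxdot d)=\bigoplus_{v}\cC(1_\cC\to 1_\cC\otimes\Psi v)\otimes\cD(\Phi v\otimes c\to d)$ the left-hand factor is $\cC(1_\cC\to\Psi v)\cong\cC(\Psi 1_\cV\to\Psi v)$, which full faithfulness of $\Psi$ identifies with $\cV(1_\cV\to v)$; again only $v\cong 1_\cV$ contributes, yielding $\cD(\Phi 1_\cV\otimes c\to d)\cong\cD(c\to d)$. (Note that the reverse-braided nature of $\Psi$ plays no role here, since only its full faithfulness as a tensor functor is used.) I expect the only point requiring genuine care to be not the vanishing of the higher summands but the bookkeeping showing that the surviving $v=1_\cV$ summand is exactly the image of $F$ (resp.\ $G$) and that the resulting isomorphism respects identities and composition; this amounts to tracking the unit constraints through the ladder category's monoidal structure and is routine once the $\Irr(\cV)$-grading is in hand.
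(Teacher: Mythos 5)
Your proof is correct and takes essentially the same approach as the paper's: both work in the ladder category model, reduce to the explicit morphism-space formula $\bigoplus_{v\in\Irr(\cV)}\cC(a\to b\otimes\Psi v)\otimes\cD(\Phi v\to 1_\cD)$ (and its mirror for part (2)), and use full faithfulness of $\Phi$ (resp.\ $\Psi$) together with semisimplicity of $\cV$ and simplicity of $1_\cV$ to kill every summand except $v=1_\cV$. The only difference is that you make explicit some bookkeeping (the fully faithful Cauchy-completion embedding, identification of the surviving summand with the image of $F$ or $G$) that the paper compresses into ``the result follows.''
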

\begin{proof}
We only prove (1) as (2) is similar.
We use the ladder category model $\cC\boxdot_\cV \cD$ for $\cC\boxtimes_\cV \cD$.
Observe that
$$
(\cC\boxdot_\cV \cD)\big(F(a)\to F(b)\big)
=
(\cC\boxdot_\cV \cD)\big(a\boxdot 1_\cD\to b\boxdot 1_\cD\big)
=
\bigoplus_{v\in \Irr(\cV)} \cC\big(a\to b\otimes \Psi v\big) \otimes \cD\big(\Phi v \to 1_\cD\big).
$$
If $\Phi$ is fully faithful,
the only $v\in \Irr(\cV)$ with $\cD(\Phi v \to 1_\cD)\neq 0$ is $1_\cV$, and moreover, $\cD(\Phi 1_\cV\to 1_\cD)=\cD(\Phi 1_\cV\to \Phi 1_\cV)\cong \cV(1_\cV\to 1_\cV)=\bbC$.
The result follows.
\end{proof}

\begin{cor}
\label{cor:RelativeProductFusion}
Suppose $\cC,\cD,\cV$ are all fusion.
If $\Psi: \cV\to \cC$ or $\Phi: \cV\to \cD$ is fully faithful, then $\cC\boxtimes_\cV \cD$ is fusion.
\end{cor}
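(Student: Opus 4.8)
The plan is to verify the defining properties of a fusion category---finite-dimensional hom spaces, a simple unit, rigidity, semisimplicity, and finitely many isomorphism classes of simple objects---for the Cauchy completion of the ladder category $\cC\boxdot_\cV\cD$, which models $\cC\boxtimes_\cV\cD$ and carries a monoidal structure by construction. By the evident symmetry (swapping the roles of $\cC$ and $\cD$ and replacing $\cV$ by $\cV^{\rev}$), it suffices to treat the case in which $\Phi\colon\cV\to\cD$ is fully faithful.

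First I would dispatch the bookkeeping conditions directly from the ladder formula
$$
(\cC\boxdot_\cV\cD)\big(a\boxdot c\to b\boxdot d\big)=\bigoplus_{v\in\Irr(\cV)}\cC\big(a\to b\otimes\Psi v\big)\otimes\cD\big(\Phi v\otimes c\to d\big).
$$
Since $\cV$ is fusion the index set $\Irr(\cV)$ is finite, and since $\cC,\cD$ are fusion each summand is finite-dimensional; hence all hom spaces of the ladder category, and therefore of its Cauchy completion, are finite-dimensional. The unit object is $1_\cC\boxdot 1_\cD=F(1_\cC)$, and Lemma~\ref{lem:CanonicalInclusionsFF} (applied to $\End F(1_\cC)$, using that $\Phi$ is fully faithful) already gives $\End(1_\cC\boxdot 1_\cD)=\C$, so the unit is simple. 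For rigidity, I would use that $a\boxdot d\cong F(a)\otimes G(d)$ and that $F,G$ are tensor functors out of the rigid categories $\cC,\cD$, so every generator $a\boxdot d$ admits a dual; passing to the additive and idempotent completion of this rigid monoidal category preserves rigidity, duals of summands being the mates of the corresponding idempotents. Finally, bilinearity of $\boxdot$ shows every object is a summand of a finite direct sum of the objects $c_i\boxdot d_j$ with $c_i\in\Irr(\cC)$, $d_j\in\Irr(\cD)$; once semisimplicity is established, the simples are summands of these finitely many objects, each of which has finite-dimensional endomorphism algebra, so there are finitely many simples up to isomorphism.

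The crux---and the step I expect to be the main obstacle---is therefore semisimplicity, i.e.\ that $\End(c_i\boxdot d_j)$ is a semisimple algebra for all $i,j$, since the ladder composition makes the multiplication on these algebras delicate. I would prove this via separability: over $\C$ every fusion category is separable (its global dimension is nonzero), and the relative tensor product over the separable braided fusion category $\cV$ of the separable module tensor categories $\cC$ and $\cD$ is again separable, hence semisimple. Concretely, the canonical (co)evaluations of $\cV$, transported through the tensor functors $\Phi$ and $\Psi$, should furnish a separability section of the multiplication on the relevant endomorphism algebras, forcing the Jacobson radical to vanish. This is precisely the statement that composition of $1$-morphisms in the $4$-category of braided fusion categories lands in fusion categories, as recorded in Remark~\ref{rem:1Composition}, so one may alternatively cite that framework; I would nonetheless prefer to carry out the separability argument directly on the ladder model to keep the proof self-contained. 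Assembling semisimplicity with the finiteness, unit, and rigidity observations above then yields that $\cC\boxtimes_\cV\cD$ is fusion.
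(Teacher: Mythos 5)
Your proposal is correct, and it shares its two load-bearing ingredients with the paper's proof: the fully faithful hypothesis is used only to obtain a simple unit via Lemma~\ref{lem:CanonicalInclusionsFF}, and the genuinely hard property --- semisimplicity --- ultimately rests on the same external input as Remark~\ref{rem:1Composition}. The difference is in packaging. The paper disposes of everything except the unit in one stroke: $\cC\boxtimes_\cV\cD$ is multifusion because it is a composite of $1$-morphisms in the $4$-category of braided fusion categories (Remark~\ref{rem:1Composition} and the references therein), and a multifusion category with simple unit is fusion. You instead unpack ``multifusion'' into separate checks on the ladder model: finite-dimensional hom spaces and finitely many simples from the ladder hom formula, rigidity from dualizability of the generators $a\boxdot d\cong F(a)\otimes G(d)$ together with Cauchy completion, and semisimplicity isolated as the crux, to be handled by separability of fusion categories over $\C$. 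What your route buys is a more elementary, self-contained treatment of the soft properties and a correct identification of exactly where the real content lies; what it costs is that the separability step is only a sketch (``should furnish a separability section''), and making it rigorous requires the theorem that relative Deligne products of separable module categories over a separable tensor category are again separable --- which is essentially the same literature underlying Remark~\ref{rem:1Composition}. As you concede, if one simply cites that framework for semisimplicity, your argument collapses to the paper's; so the two proofs agree at the crux and differ only in how much of the multifusion verification is carried out by hand.
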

\begin{proof}
When $\cC,\cD,\cV$ are all fusion, then $\cC\boxtimes_\cV\cD$ is always multifusion as it is 1-composition in the 4-category of braided fusion categories (see Remark \ref{rem:1Composition}).
When $\Psi: \cV\to \cC$ or $\Phi: \cV\to \cD$ is fully faithful, then $\cC\boxtimes_\cV \cD$ has simple unit object by Lemma \ref{lem:CanonicalInclusionsFF}, and is thus fusion.
\end{proof}

\subsection{Canonical centralizing structure}

We now discuss the notion of \emph{centralizing structure} for two tensor categories $\cA,\cB$ equipped with tensor functors to some other tensor category $\cC$ -- see \cite[Def.~3.24]{2111.06378}.

\begin{defn}
Let $\cA,\cB,\cC$ be tensor categories, and suppose we have tensor functors $F: \cA\to \cC$ and $G: \cB\to \cC$.
A \emph{centralizing structure} 
is a family of natural isomorphisms
$\{\sigma_{a,b}: F(a)\otimes G(b) \to G(b)\otimes F(a)\}_{a\in\cA,b\in\cB}$ satisfying the following conditions, where coherence isomorphisms have been suppressed:
\begin{itemize}
\item 
For $a\in \cA$ and $b,b'\in \cB$,
$(\id_{G(b)}\otimes \sigma_{a,b'})\circ (\sigma_{a,b}\otimes \id_{G(b')}) = \sigma_{a,b\otimes b'}$
\item
For $a,a'\in \cA$ and $b\in \cB$,
$(\sigma_{a,b}\otimes \id_{F(a')})\circ (\id_{F(a)}\otimes \sigma_{a',b}) = \sigma_{a\otimes a',b}$.
\end{itemize}
\end{defn}

A centralizing structure for $F: \cA\to \cC$ and $G: \cB\to \cC$ is equivalent to the data needed to promote $F\boxtimes G : \cA\boxtimes \cB \to \cC$ to a tensor functor.

\begin{construction}
\label{construction:CanonicalCentralizing}
By universality of $\cC\boxtimes_\cV \cD$ for $\cV$-balanced functors out of $\cC\boxtimes \cD$,
$\cC\boxtimes_\cV \cD$ comes equipped with a canonical centralizing structure 
for the canonical functors from Lemma \ref{lem:CanonicalInclusionsFF}.
Indeed, since 
$$
(c\boxtimes 1)\otimes (1\boxtimes d)
=
c\boxtimes d
=
(1\boxtimes d)\otimes (c\boxtimes 1)
$$
in $\cC\boxtimes \cD$
for all $c\in \cC$ and $d\in \cD$,
we get a canonical isomorphism $\sigma_{c,d}: F(c)\otimes G(d) \to G(d)\otimes F(c)$ 
from the tensorator of $-\boxtimes_\cV - : \cC\boxtimes \cD\to \cC\boxtimes_\cV \cD$:
$$
(c\boxtimes_\cV 1)\otimes (1\boxtimes_\cV d) 
\xrightarrow{\cong} 
c\boxtimes_\cV d
\xrightarrow{\cong}
(1\boxtimes_\cV d) \otimes (c\boxtimes_\cV 1).
$$
The centralizing axioms are satisfied by associativity of the tensorator.

Moreover, we have the following additional property of this centralizing structure:
whenever $c$ or $d$ is in the image of $\cV$, the centralizing structure $\sigma$ is compatible with the half-braiding coming from the image of $\cV$ in $Z(\cC)$ or $Z(\cD)$ respectively.
For example, when $c=\Psi v$, the following diagram commutes:
\[
\begin{tikzcd}
(\Psi v \boxtimes_\cV 1)\otimes (1\boxtimes_\cV d)
\arrow[r,"\zeta_v \otimes \id"]
\arrow[dd, "\sigma_{\Psi v, d}"]
\arrow[dr, "\cong"]
&
(1\boxtimes_\cV \Phi v)\otimes (1\boxtimes_\cV d)
\arrow[r,"\cong"]
&
1 \boxtimes_\cV (\Phi v\otimes d)
\arrow[dd, "\id\boxtimes_\cV e_{\Phi v, d}"]
\\
&
\Psi v \boxtimes_\cV d
\arrow[dl, "\cong"]
\arrow[ur, "\eta_{1,v,d}"]
\\
(1\boxtimes_\cV d)\otimes (\Psi v \boxtimes_\cV 1)
\arrow[r,"\id\otimes \zeta_v"]
&
(1\boxtimes_\cV d)\otimes (1\boxtimes_\cV \Phi v)
\arrow[r,"\cong"]
&
1 \boxtimes_\cV (d\otimes \Phi v)
\end{tikzcd}
\]
The top square is naturality of $\eta$ (recall $\zeta_v=\eta_{1,v,1}$), the left triangle is the definition of $\sigma$, and the bottom right pentagon is \eqref{eq:MonoidalBalancingAxiom} for $B=-\boxtimes_\cV-$ setting
$c_1=c_2=1_\cC$, $v_1=1_\cV$ and $v_2=v$, and $d_1=d$ and $d_2=1_\cD$.
Similarly, when $d=\Phi v$, the following diagram commutes
\begin{equation}
\label{eq:SigmaCompatibility}
\begin{tikzcd}
(c\otimes \Psi v) \boxtimes 1
\arrow[r,"\cong"]
\arrow[dr, "\eta_{c,v,1}"]
&
(c\boxtimes_\cV 1)
\otimes
(\Psi v \boxtimes_\cV 1) 
\arrow[r,"\id\otimes \zeta_v "]
&
(c\boxtimes_\cV 1)
\otimes
(1 \boxtimes_\cV \Phi v) 
\arrow[dd, "\sigma_{c, \Phi v}"]
\\
&
c \boxtimes_\cV \Phi v
\arrow[ur, "\cong"]
\arrow[dr, "\cong"]
\\
(\Psi v\otimes c) \boxtimes 1
\arrow[r,"\cong"]
\arrow[uu, "e_{\Psi v, c}\boxtimes_\cV \id"]
&
(\Psi v \boxtimes_\cV 1) 
\otimes
(c\boxtimes_\cV 1)
\arrow[r,"\zeta_v\otimes \id"]
&
(1 \boxtimes_\cV \Phi v)\otimes (c\boxtimes_\cV 1).
\end{tikzcd}
\end{equation}
\end{construction}

\begin{lem}
\label{lem:RelativeCenterFF}
If $\Psi^Z: \cV\to Z(\cC)$ is fully faithful
, then the functor 
which maps $\cD$ to the relative commutant $Z_\cC(\cC\boxtimes_\cV \cD)$ of $\cC$ inside $\cC\boxtimes_\cV \cD$
is fully faithful.
\end{lem}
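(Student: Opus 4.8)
The plan is to lift the canonical functor $G:\cD\to\cC\boxtimes_\cV\cD$ of Lemma~\ref{lem:CanonicalInclusionsFF} to a functor $\widetilde G$ landing in the relative center $Z_\cC(\cC\boxtimes_\cV\cD)$, and then to verify full faithfulness by a direct computation in the ladder model, where the gap between the hypotheses \emph{$\Psi$ fully faithful} and \emph{$\Psi^Z$ fully faithful} is exactly repaired by the relative-center condition. First I would equip each object $G(d)=1_\cC\boxtimes_\cV d$ with half-braidings against the image of $\cC$ using the canonical centralizing isomorphisms $\sigma_{c,d}:F(c)\otimes G(d)\to G(d)\otimes F(c)$ of Construction~\ref{construction:CanonicalCentralizing}. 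The two centralizing axioms are precisely the two hexagon identities for a half-braiding, and naturality of $\sigma$ in $c$ gives the naturality of the half-braiding, so $(G(d),\sigma_{-,d})$ is an object of $Z_\cC(\cC\boxtimes_\cV\cD)$; naturality of $\sigma$ in $d$ shows that $G(g)$ intertwines the half-braidings for every $g$, which produces the lift $\widetilde G:\cD\to Z_\cC(\cC\boxtimes_\cV\cD)$ along the forgetful functor.

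Faithfulness is then formal: the forgetful functor $Z_\cC(\cC\boxtimes_\cV\cD)\to\cC\boxtimes_\cV\cD$ is faithful and $\Forget\circ\widetilde G=G$, while $G$ is faithful because in the ladder model its image on morphisms is the $v=1_\cV$ summand of
\[
(\cC\boxdot_\cV\cD)\big(G(d_1)\to G(d_2)\big)=\bigoplus_{v\in\Irr(\cV)}\cC\big(1\to\Psi v\big)\otimes\cD\big(\Phi v\otimes d_1\to d_2\big).
\]
For fullness I would work with this same decomposition. The essential point is that Lemma~\ref{lem:CanonicalInclusionsFF}(2) does \emph{not} apply here: its hypothesis that $\Psi$ is fully faithful would force $\cC(1\to\Psi v)=\cV(1\to v)=0$ for $v\neq 1_\cV$, whereas we only assume $\Psi^Z$ is fully faithful, so a priori the spaces $\cC(1\to\Psi v)$ are strictly larger than $\cV(1\to v)$ and $G$ itself need not be full.

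The main step is to show that a morphism $f=\sum_v\xi_v\otimes\phi_v$ lies in $Z_\cC\big(\widetilde G d_1,\widetilde G d_2\big)$ if and only if each component $\xi_v\in\cC(1\to\Psi v)$ is compatible with half-braidings, i.e.\ lies in $\Hom_{Z(\cC)}(1,\Psi^Z v)\subseteq\cC(1\to\Psi v)$. Concretely, I would write out the condition $\sigma_{c,d_2}\circ(\id_{F(c)}\otimes f)=(f\otimes\id_{F(c)})\circ\sigma_{c,d_1}$ in the ladder model of \cite{mitchell-thesis}: the $\cC$-object $\Psi v$ created by $\xi_v$ must be transported past $F(c)$, and by the ladder monoidal structure together with the compatibility of $\sigma$ with the $\cV$-half-braidings recorded in Construction~\ref{construction:CanonicalCentralizing} (diagram~\eqref{eq:SigmaCompatibility} and its mirror) this transport is governed by the half-braiding $e_{\Psi^Z v,c}$. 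Imposing the condition for all $c\in\cC$ then says exactly that $e_{\Psi^Z v,c}\circ(\xi_v\otimes\id_c)=\id_c\otimes\xi_v$ for every $c$, which is precisely the assertion that $\xi_v$ is a morphism in $Z(\cC)$. Since $\Psi^Z$ is fully faithful, $\Hom_{Z(\cC)}(1,\Psi^Z v)\cong\cV(1\to v)$, which vanishes for $v\neq 1_\cV$ and equals $\bbC$ for $v=1_\cV$. Hence the relative-center condition annihilates every summand with $v\neq 1_\cV$, leaving
\[
\Hom_{Z_\cC(\cC\boxtimes_\cV\cD)}\big(\widetilde G d_1,\widetilde G d_2\big)\cong\cC(1\to 1)\otimes\cD(d_1\to d_2)\cong\cD(d_1,d_2),
\]
which is exactly the image of $\widetilde G$, giving fullness.

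I expect the genuine obstacle to be this last identification: unwinding the ladder-category monoidal structure carefully enough to prove, summand by summand, that commuting with $\sigma_{c,-}$ for all $c\in\cC$ is equivalent to $\xi_v$ commuting with the half-braidings of $Z(\cC)$. Once that compatibility is established, the remaining ingredients — the centralizing axioms furnishing a valid half-braiding, faithfulness, and the final counting argument via full faithfulness of $\Psi^Z$ — are all formal.
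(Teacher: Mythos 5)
Your proposal is correct and follows essentially the same route as the paper's proof: working in the ladder model, decomposing a morphism into its $v$-components, using diagram \eqref{eq:SigmaCompatibility} to reduce the relative-center condition to $e_{\Psi v,c}\circ(\xi_v\otimes\id_c)=\id_c\otimes\xi_v$ for all $c\in\cC$, and then invoking full faithfulness of $\Psi^Z$ (not $\Psi$) to kill every summand with $v\neq 1_\cV$. The only difference is that you make explicit the construction of the lift $\widetilde{G}$ via the centralizing structure and the faithfulness step, both of which the paper leaves implicit.
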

\begin{proof}
Using the ladder category model $\cC\boxdot_\cV \cD$, 
$$
(\cC\boxdot_\cV \cD)(1\boxdot d_1\to 1\boxdot d_2)
=
\bigoplus_{v\in \Irr(\cV)} \cC\big(1\to 1\otimes \Psi v\big) \otimes \cD\big(\Phi v \otimes d_1\to d_2\big).
$$
A morphism $f:1\boxdot d_1\to 1\boxdot d_2$ lies in $Z_\cC(\cC\boxdot_\cV \cD)$ exactly when it is compatible with the half-braidings, which are induced from the centralizing structure.
Writing $f=\sum_{v\in \Irr(\cV),i} g_{v,i}\otimes h_{v,i} $ as a sum of its $v$-components, where $g_{v,i}:1_\cC\to \Psi(v)$ and $h_{v,i}:\Phi(v)\otimes d_1\to d_2$, where the $h_{v,i}$ form a basis of $\cD\big(\Phi v \otimes d_1\to d_2\big)$,
this compatibility reduces via \eqref{eq:SigmaCompatibility} to
$$
e_{\Psi v,c}\circ (g_{v,i}\otimes \id_c) = \id_c\otimes g_{v,i} 
\qquad\qquad\qquad
\forall\, c\in \cC,\, \forall\, v\in \Irr(\cV).
$$
This implies that 
$g_{v,i} \in \cC(1\to \Psi v)$ is actually a morphism in $Z(\cC)$ and thus $g_{v,i} \in Z(\cC)(1\to \Psi^Z v)$.
Since $\Psi^Z$ was assumed to be fully faithful, 
$g_{v,i}=0$ unless $v=1_\cV$, in which case each $g_{1,i} : 1_\cC\to 1_\cC$ is a scalar.
We conclude that 
$Z_\cC(\cC\boxdot_\cV \cD)(1\boxdot d_1\to 1\boxdot d_2)\cong \cD(d_1\to d_2)$.
\end{proof}

\begin{thm}
\label{thm:FactorizeViaCentralizers}
Suppose $\cC$ is a spherical fusion category, $\cE$ is any tensor category with simple unit, and $\cC\hookrightarrow \cE$ is a fully-faithful embedding.
The tensor product map $\otimes : Z_{\cC}(\cE)\boxtimes_{Z(\cC)} \cC\to \cE$ is fully faithful.
\end{thm}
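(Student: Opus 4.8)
The plan is to compute both sides on morphism spaces using the ladder model $Z_\cC(\cE)\boxdot_{Z(\cC)}\cC$ and to identify the induced map with an explicit composite of adjunction isomorphisms. Write $U:=\Forget\colon Z_\cC(\cE)\to\cE$ and $\iota\colon\cC\hookrightarrow\cE$ for the given embedding; matching $Z_\cC(\cE)\boxtimes_{Z(\cC)}\cC$ with the template $\cC\boxtimes_\cV\cD$ of Section~\ref{sec:BalancedTensor}, the balancing functors are $\Phi=\Forget\colon Z(\cC)\to\cC$ and the canonical inclusion $\Psi\colon Z(\cC)\to Z_\cC(\cE)$. Because $\cC\hookrightarrow\cE$ is fully faithful, $\Psi$ is fully faithful (a morphism of underlying objects lies in $Z_\cC(\cE)$ iff it commutes with the $\cC$-half-braidings, which is exactly the condition defining $Z(\cC)$), so Lemma~\ref{lem:CanonicalInclusionsFF}(2) applies and $G\colon\cC\to Z_\cC(\cE)\boxtimes_{Z(\cC)}\cC$ is fully faithful; in particular the relative product is the expected one.

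First I would reduce to a single family of Hom-spaces. Every object is a summand of $F(A)\otimes G(c)=A\boxdot c$, and since $c\in\cC$ is dualizable and $G$ and $T=\otimes$ are tensor functors, one has $\Hom(A\boxdot c, B\boxdot d)\cong\Hom(A\boxdot 1, B\boxdot(d\otimes\overline c))$ and $\cE(A\otimes c, B\otimes d)\cong\cE(A, B\otimes(d\otimes\overline c))$, compatibly with $T_*$ (no rigidity of $\cE$ is needed, only of the $\cC$-factor). Thus it suffices to prove that for all $A,B\in Z_\cC(\cE)$ and $X\in\cC$ the map
\[
T_*\colon\bigoplus_{w\in\Irr(Z(\cC))}Z_\cC(\cE)(A, B\otimes\Psi w)\otimes\cC(\Phi w, X)\longrightarrow\cE(A, B\otimes X),\qquad g\otimes h\longmapsto(\id_B\otimes h)\circ g,
\]
coming from the ladder description of $\Hom(A\boxdot 1, B\boxdot X)$, is an isomorphism, naturally in $X$.

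To see this, I would resum the left-hand side. Using the two-sided adjunction between $\Phi$ and the induction functor $I\colon\cC\to Z(\cC)$ (available since $\cC$ is fusion), $\cC(\Phi w, X)\cong Z(\cC)(w, IX)$, and semisimple co-Yoneda in $Z(\cC)$ collapses the sum to $Z_\cC(\cE)(A, B\otimes\Psi(IX))$. Next, the half-braiding of $B$ gives a projection-formula isomorphism $B\otimes\Psi(IX)\cong R(UB\otimes X)$ in $Z_\cC(\cE)$, where $R\colon\cE\to Z_\cC(\cE)$ is the relative-center induction $Y\mapsto\bigoplus_{s\in\Irr(\cC)}s\otimes Y\otimes\overline s$ with its canonical half-braiding: indeed $U\Psi(IX)=\bigoplus_s s\otimes X\otimes\overline s$, and the half-braiding of $B$ with each $s$ intertwines $\bigoplus_s UB\otimes s\otimes X\otimes\overline s$ with $\bigoplus_s s\otimes UB\otimes X\otimes\overline s=UR(UB\otimes X)$. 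Finally, Frobenius reciprocity $Z_\cC(\cE)(A, R(Y))\cong\cE(UA, Y)$ (whose unit and counit are built from the evaluations and coevaluations of $\cC$) with $Y=UB\otimes X$ yields $\cE(A, B\otimes X)$. Composing these three natural isomorphisms produces a natural isomorphism from the left-hand side to $\cE(A, B\otimes X)$.

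Two points require care, and the second is the real obstacle. First, all the adjunctions and the half-braiding on $R$ must be set up without assuming $\cE$ semisimple or rigid; this is fine because $R$, $I$, and their (co)units are defined by the explicit averaging over $\Irr(\cC)$, and sphericity of $\cC$ is exactly what makes the two induction adjunctions two-sided and the half-braiding of $R$ coherent. This is also where modularity of $Z(\cC)$, via \cite[Cor.~3.26]{MR3039775}, enters: it guarantees that $\Psi^Z\colon Z(\cC)\to Z(Z_\cC(\cE))$ is fully faithful, which is the hypothesis of Lemma~\ref{lem:RelativeCenterFF} and pins down the balancing. Second, one must check that the composite of the three abstract isomorphisms above literally equals the concrete map $g\otimes h\mapsto(\id_B\otimes h)\circ g$; this is a diagram chase tracking the co-Yoneda identification and the units/counits through the definition of $T$ via the centralizing structure \eqref{eq:SigmaCompatibility}, and is the step where the balancing coherence must be used with care. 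Granting this matching, $T_*$ is an isomorphism on the reduced Hom-spaces, hence on all Hom-spaces, so $T$ is fully faithful.
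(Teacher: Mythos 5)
Your proposal is correct, but it follows a genuinely different route from the paper's proof. After the same initial reduction via rigidity of $\cC$, the paper argues representation-theoretically: $\cE(e_1\to e_2\otimes X)$ with $X=\bigoplus_{c\in\Irr(\cC)}c$ carries an action of Ocneanu's tube algebra $\Tube(\cC)$; since $\Rep(\Tube(\cC))\cong Z(\cC)^{\rm op}$ with irreducibles $H_z=\cC(z\to X)$ \cite{MR1782145,MR1966525}, that space decomposes as $\bigoplus_z M_z\otimes H_z$, and the Yoneda lemma identifies the multiplicity space $M_z$ with $Z_\cC(\cE)(e_1\to e_2\otimes z)$ (maps of $\Tube(\cC)$-modules are exactly the vectors intertwining the two actions). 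You instead resum the ladder Hom-space through a chain of adjunctions: $\cC(\Phi w,X)\cong Z(\cC)(w,IX)$, semisimple co-Yoneda, the projection formula $B\otimes\Psi(IX)\cong R(UB\otimes X)$, and Frobenius reciprocity for the relative-center induction $R$. Both arguments rest on the same inputs (sphericality of $\cC$, semisimplicity of $Z(\cC)$), and the tube-algebra theorem the paper cites is itself proved by your induction technology; the paper's route outsources the coherence-heavy steps to the literature and gets the matching between the abstract isomorphism and the evaluation map essentially built into the isotypic decomposition, whereas yours is self-contained, works for arbitrary $X\in\cC$, and makes naturality transparent, at the price of the two checks you flag. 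Those checks do go through: the projection-formula map $\bigoplus_s e_{B,s}\otimes\id_{X\otimes\overline s}$ is a morphism in $Z_\cC(\cE)$ by the hexagon and naturality of $e_B$, and if you take $I$ and $R$ to be the \emph{right} adjoints of the respective forgetful functors, both counits become (up to normalization) projections onto the unit summands $s=1_\cC$ and $w=1_{Z(\cC)}$, all three isomorphisms are block-diagonal over $\Irr(\cC)$ resp.\ $\Irr(Z(\cC))$, and the composite traced on $g\otimes h$ equals $(\id_B\otimes h)\circ g$ up to a nonzero scalar per block, which suffices for $T_*$ to be an isomorphism. One stray claim you should delete: modularity of $Z(\cC)$ and Lemma~\ref{lem:RelativeCenterFF} play no role in this theorem; the central structure needed to form $Z_\cC(\cE)\boxtimes_{Z(\cC)}\cC$ is simply restriction of half-braidings, $\sigma_{z,A}=e_{A,z}^{-1}$, with no nondegeneracy required.
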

\begin{proof}
We must show that whenever $c_1,c_2\in \cC$ and $e_1,e_2\in Z_\cC(\cE)$, we have
$$
\bigoplus_{z\in \Irr(Z(\cC))} 
Z_\cC(\cE)(e_1\to e_2\otimes z)
\otimes 
\cC(z\otimes c_1\to c_2)
\quad \cong\quad
\cE(e_1\otimes c_1\to e_2\otimes c_2).
$$
Using rigidity and semisimplicity of $\cC$, we may assume $c_1=1$ and $c_2=X:=\bigoplus_{c\in \Irr(\cC)}c$. 
Our question becomes:
$$
\bigoplus_{z\in \Irr(Z(\cC))} 
Z_\cC(\cE)(e_1\to e_2\otimes z)
\otimes 
\cC(z\to X)
\quad\stackrel{?}{\cong}\quad
\cE(e_1\to e_2\otimes X).
$$

The right hand side $\cE(e_1\to e_2\otimes X)$ carries a canonical action of Ocneanu's tube algebra $\Tube(\cC)$:
$$
\tikzmath{
\draw[thick, blue] (-.15,-.7) --node[left]{$\scriptstyle c$} (-.15,-.3);
\draw (.15,-.7) --node[right]{$\scriptstyle a$} (.15,-.3);
\draw[thick, blue] (.15,.7) --node[right]{$\scriptstyle c$} (.15,.3);
\draw (-.15,.7) --node[left]{$\scriptstyle b$} (-.15,.3);
\roundNbox{}{(0,0)}{.3}{.1}{.1}{$f$}
}
\rhd
\tikzmath{
\draw (0,-.7) --node[right]{$\scriptstyle e_1$} (0,-.3);
\draw (.15,.7) --node[right]{$\scriptstyle d$} (.15,.3);
\draw (-.15,.7) --node[left]{$\scriptstyle e_2$} (-.15,.3);
\roundNbox{}{(0,0)}{.3}{.1}{.1}{$\xi$}
}
=
\delta_{a=d}
\tikzmath{
\draw[thick, blue] (.15,1) to[out=-90,in=90] (-.9,.3) --node[left]{$\scriptstyle c$} (-.9,-.3) to[out=-90,in=-90] (.9,-.3) --node[right]{$\scriptstyle \overline{c}$} (.9,1.6) arc (0:180:.225);
\draw[knot] (0,-1.2) --node[right]{$\scriptstyle e_1$} (0,-.9) -- (0,-.3);
\draw (.45,1) --node[right]{$\scriptstyle d$} (.45,.3);
\draw (.15,1.6) --node[left]{$\scriptstyle b$} (.15,2);
\draw[knot] (-.45,2) --node[left]{$\scriptstyle e_2$} (-.45,.3);
\roundNbox{}{(0,0)}{.3}{.4}{.4}{$\xi$}
\roundNbox{}{(.3,1.3)}{.3}{.1}{.1}{$f$}
}
\qquad\qquad
f \in \cC(c\otimes a\to b\otimes c),\,\,
\xi \in \cE(e_1\to e_2\otimes d).
$$
We can decompose the above representation into irreps of $\Tube(\cC)$.
Recall from \cite{MR1782145,MR1966525} that $\Rep(\Tube(\cC)) \cong Z(\cC)^{\rm op}$, and every irrep is of the form $H_z = \cC(z\to X)$ where $z\in \Irr(Z(\cC))$, which carries a similar $\Tube(\cC)$-action as above.
Hence we can decompose
$$
\cE(e_1\to e_2\otimes X) 
\cong 
\bigoplus_{z\in \Irr(Z(\cC))} M_z \otimes H_z
=
\bigoplus_{z\in \Irr(Z(\cC))} M_z \otimes \cC(z\to X)
$$
where $M_z$ is a multiplicity space.
It remains to identify $M_z$ with $Z_\cC(\cE)(e_1\to e_2\otimes z)$.

Since $\cC\hookrightarrow \cE$ fully faithfully, $H_z = \cC(z\to X) \cong \cE(z\to X)$, so
$$
M_z
\cong
\Rep(\Tube(\cC))(
\cE(z\to X)
\to 
\cE(e_1\to e_2\otimes X)
).
$$
Observe that $\cE(z\to -)$ and $\cE(e_1\to e_2\otimes -)$ are both functors $\cE\to \Vec$.
By the Yoneda Lemma, $\Hom(\cE(z\to -) \to \cE(e_1\to e_2\otimes -)) \cong \cE(e_1\to e_2\otimes z)$ canonically.
Since maps of $\Tube(\cC)$ representations are maps between the underlying vector spaces which intertwine the $\Tube(\cC)$-actions, we see that $M_z$ is exactly the  subspace of $\cE(e_1\to e_2\otimes z)$
which intertwines the two $\Tube(\cC)$-actions, i.e.,  $Z_\cC(\cE)(e_1\to e_2\otimes z)$.
\end{proof}

Using the fact that $\cC'=Z_\cC(\Bim(R))$, we have the following immediate corollary.

\begin{cor}
\label{cor:FactorizeBimR}
$\Bim(R)\cong \cC' \boxtimes_{Z(\cC)} \cC$.
\end{cor}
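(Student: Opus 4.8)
The plan is to deduce the statement directly from Theorem~\ref{thm:FactorizeViaCentralizers} applied to $\cE=\Bim(R)$, followed by a short essential-surjectivity argument, since the theorem provides full faithfulness but not an equivalence. First I would check the hypotheses of that theorem. The category $\cC$ is a unitary fusion category, so it carries its canonical unitary spherical structure and is spherical fusion; because $R$ is a factor, the unit object $L^2R$ of $\Bim(R)$ satisfies $\End(L^2R)=Z(R)=\bbC$, so $\Bim(R)$ has simple unit; and $\alpha:\cC\to\Bim(R)$ is fully faithful by hypothesis. The relative tensor product $\cC'\boxtimes_{Z(\cC)}\cC$ is defined using the module structures $\Forget:Z(\cC)\to\cC$ and the canonical fully faithful braided functor $\Psi^Z:Z(\cC)^{\rev}\to Z(\cC')$ of Corollary~\ref{cor:M is a Z(C)rev module tensor cat}. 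Using the identification $\cC'=Z_\cC(\Bim(R))$ of Definition~\ref{def: C'}, Theorem~\ref{thm:FactorizeViaCentralizers} then says that the tensor product functor
$$
F:=\otimes:\cC'\boxtimes_{Z(\cC)}\cC\to\Bim(R),\qquad e\boxtimes_{Z(\cC)}c\mapsto\Forget(e)\otimes\alpha(c),
$$
is fully faithful.

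It remains to show $F$ is essentially surjective. For this I would use the induction functor $I:\Bim(R)\to Z_\cC(\Bim(R))=\cC'$ biadjoint to the forgetful functor, whose underlying bimodule is $\Forget(I(X))\cong\bigoplus_{c\in\Irr(\cC)}\alpha(c)\otimes X\otimes\overline{\alpha(c)}$. The summand indexed by $c=1_\cC$ is $X$ itself, so $X$ is a direct summand of $\Forget(I(X))$. Since $\Forget(I(X))=F\big(I(X)\boxtimes_{Z(\cC)}1_\cC\big)$, the idempotent on $F(I(X)\boxtimes_{Z(\cC)}1_\cC)$ projecting onto $X$ equals $F(\tilde e)$ for a unique idempotent $\tilde e$ by full faithfulness; as $\cC'\boxtimes_{Z(\cC)}\cC$ is idempotent complete (being a Cauchy completion), $\tilde e$ splits there, and $F$ carries its splitting to a splitting of the original idempotent, whence $X$ lies in the essential image. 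Thus $F$ is an equivalence.

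The step I expect to require the most care is the essential-surjectivity argument, and specifically verifying that the canonical half-braided induction $I(X)\in\cC'$ really has underlying bimodule $\bigoplus_{c}\alpha(c)\otimes X\otimes\overline{\alpha(c)}$ containing $X$ as its $c=1_\cC$ summand, so that the forgetful functor $\cC'\to\Bim(R)$ is dominant. Everything genuinely substantive — the $\Tube(\cC)$-module decomposition of $\cE(e_1\to e_2\otimes X)$ and the Yoneda identification of the multiplicity spaces with $Z_\cC(\cE)(e_1\to e_2\otimes z)$ — is already packaged into Theorem~\ref{thm:FactorizeViaCentralizers}, so the corollary is indeed a matter of assembling the hypotheses of that theorem for $\cE=\Bim(R)$ and promoting full faithfulness to an equivalence via the induction/idempotent-splitting argument above.
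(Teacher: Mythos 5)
Your proposal is correct and follows essentially the same route as the paper's proof: apply Theorem~\ref{thm:FactorizeViaCentralizers} with $\cE=\Bim(R)$ and $\cC'=Z_\cC(\Bim(R))$ to get full faithfulness, then establish dominance by exhibiting any $X\in\Bim(R)$ as a summand of $\bigoplus_{c\in\Irr(\cC)}c\otimes X\otimes\overline{c}\in\cC'$ (the paper cites \cite[Lem.~6.3]{MR4581741} for exactly this). Your explicit idempotent-splitting step is simply the detail the paper leaves implicit in passing from dominance to an equivalence.
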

\begin{proof}
After applying Theorem \ref{thm:FactorizeViaCentralizers}, the only thing that remains to show is that the functor $\cC' \boxtimes_{Z(\cC)}\cC\to \Bim(R)$ is dominant.
Indeed, for any object $X\in \Bim(R)$, $X$ is a summand of $\bigoplus_{c\in \Irr(\cC)} c\otimes X \otimes \overline c \in \cC'$ by \cite[Lem.~6.3]{MR4581741}.
\end{proof}

\subsection{The anchored planar algebra model for \texorpdfstring{$\cC\boxtimes_\cV\cD$}{C timesV D}}

In this section, we give a model $\cE$ for $\cC\boxtimes_\cV\cD$ using anchored planar algebras in the setting that $\Phi: \cV\to \cD$ admits a right adjoint (so that there is an anchored planar algebra associated to $\cD$).
We begin by defining a full subcategory $\cE_0$, and $\cE$ is the (unitary) Cauchy completion. 

\begin{itemize}
\item
Objects in $\cE_0$ are formal symbols 
$c \otimes x^{i}$
for $c\in \cC$ and $i\geq 0$. 
By convention, $x^0 = 1$.
\item
Morphism spaces are defined by
$$
\cE_0(
a \otimes x^{i}
\to
b \otimes x^{j}
)
:=
\cC(a \to b  \otimes \Psi \cP[j+i] ),
$$
where $\cP$ is the anchored planar algebra in $\cV$ corresponding to $(\cD,\Phi^Z:\cV\to Z(\cD),x)$.
We represent morphisms in the graphical calculus by
$$
\tikzmath{
\roundNbox{fill=white}{(0,0)}{.3}{0}{0}{$f$}
\draw (0,-.7) --node[left]{$\scriptstyle a$} (0,-.3);
\draw (0,.7) --node[left]{$\scriptstyle b$} (0,.3);
\draw (.3,0) --node[above]{$\scriptstyle \Psi\cP[j+i]$} (1.6,0);
}\,.
$$
\item
Composition of morphisms is defined by
$$
\tikzmath{
\roundNbox{fill=white}{(0,0)}{.3}{0}{0}{$g$}
\draw (0,-.7) --node[left]{$\scriptstyle b$} (0,-.3);
\draw (0,.7) --node[left]{$\scriptstyle c$} (0,.3);
\draw (.3,0) --node[above]{$\scriptstyle \Psi\cP[k+j]$} (1.6,0);
}
\circ
\tikzmath{
\roundNbox{fill=white}{(0,0)}{.3}{0}{0}{$f$}
\draw (0,-.7) --node[left]{$\scriptstyle a$} (0,-.3);
\draw (0,.7) --node[left]{$\scriptstyle b$} (0,.3);
\draw (.3,0) --node[above]{$\scriptstyle \Psi\cP[j+i]$} (1.6,0);
}
:=
\tikzmath{
	\coordinate (a) at (0,0);
	\pgfmathsetmacro{\boxWidth}{1};
	\roundNbox{unshaded}{(-2.8,.5)}{.3}{0}{0}{$g$}
	\roundNbox{unshaded}{(-2.8,-.5)}{.3}{0}{0}{$f$}
	\draw[rounded corners=5pt, very thick, unshaded] ($ (a) - (\boxWidth,\boxWidth) - (.2,0) $) rectangle ($ (a) + (\boxWidth,\boxWidth) $);
	\draw ($ (a) + 5/6*(0,1) $) -- ($ (a) - 5/6*(0,\boxWidth) $);
	\draw[thick, red] ($ (a) + 1/3*(0,\boxWidth) - 1/5*(\boxWidth,0) $) -- ($ (a) - 2/3*(\boxWidth,0) $);
	\draw[thick, red] ($ (a) - 1/3*(0,\boxWidth) - 1/5*(\boxWidth,0) $) -- ($ (a) - 2/3*(\boxWidth,0) $);
	\draw[very thick] (a) ellipse ({2/3*\boxWidth} and {5/6*\boxWidth});
	\filldraw[very thick, unshaded] ($ (a) + 1/3*(0,\boxWidth) $) circle (1/5*\boxWidth);
	\filldraw[very thick, unshaded] ($ (a) - 1/3*(0,\boxWidth) $) circle (1/5*\boxWidth);
	\node at ($ (a) + (.2,0) $) {\scriptsize{$j$}};
	\node at ($ (a) + (.2,-.65) $) {\scriptsize{$i$}};
	\node at ($ (a) + (.2,.65) $) {\scriptsize{$k$}};
  \node at ($ (a) - (\boxWidth,0) + (.1,0) $) {$\Psi$};
\draw (-2.8,-1.2) --node[left]{$\scriptstyle a$} (-2.8,-.8);
\draw (-2.8,-.2) --node[left]{$\scriptstyle b$} (-2.8,.2);
\draw (-2.8,1.2) --node[left]{$\scriptstyle c$} (-2.8,.8);
\draw (-1.2,.5) --node[above]{$\scriptstyle \Psi\cP[k+j]$} (-2.5,.5);
\draw (-1.2,-.5) --node[above]{$\scriptstyle \Psi\cP[j+i]$} (-2.5,-.5);
\draw (1,0) --node[above]{$\scriptstyle \Psi\cP[k+i]$} (2.5,0);
}\,.
$$
\item
Tensor product is given on objects by $(a\otimes x^i)\otimes (b\otimes x^j):= a\otimes b \otimes x^{i+j}$, and
on morphisms by
$$
\tikzmath{
\roundNbox{fill=white}{(0,0)}{.3}{0}{0}{$f$}
\draw (0,-.7) --node[left]{$\scriptstyle a$} (0,-.3);
\draw (0,.7) --node[left]{$\scriptstyle b$} (0,.3);
\draw (.3,0) --node[above]{$\scriptstyle \Psi\cP[j+i]$} (1.6,0);
}
\otimes
\tikzmath{
\roundNbox{fill=white}{(0,0)}{.3}{0}{0}{$g$}
\draw (0,-.7) --node[left]{$\scriptstyle c$} (0,-.3);
\draw (0,.7) --node[left]{$\scriptstyle d$} (0,.3);
\draw (.3,0) --node[above]{$\scriptstyle \Psi\cP[\ell+k]$} (1.6,0);
}
:=
\begin{tikzpicture}[baseline=-.6cm]
	\draw (0,0) -- (2,0);
	\draw (4.4,-.5) --node[above]{$\scriptstyle \Psi\cP[j+\ell+i+k]$} (6.9,-.5);
	\draw (0,.8) -- (0,-1.8);
	\draw (-1,.8) -- (-1,-1.8);
	\draw[knot] (-1,-1) -- (2,-1);
	\roundNbox{unshaded}{(-1,-1)}{.3}{0}{0}{$f$}
	\roundNbox{unshaded}{(0,0)}{.3}{0}{0}{$g$}
	\PsiTensor{(3.4,-.5)}{1.2}{i}{j}{k}{\ell}
	\node at (1.2,.2) {\scriptsize{$\Psi\cP[\ell+k]$}};
	\node at (1.2,-.8) {\scriptsize{$\Psi\cP[j+i]$}};
	\node at (-1.3,-1.6) {\scriptsize{$a$}};
	\node at (-.3,-1.6) {\scriptsize{$c$}};
	\node at (-1.3,.6) {\scriptsize{$b$}};
	\node at (-.3,.6) {\scriptsize{$d$}};
\end{tikzpicture}
$$
where the crossing is the half-braiding of $\Psi^Z\cP[j+i]\in Z(\cC)$ with $c\in \cC$ (this choice ensures that the box labelled $g$ can pass under the crossing).
Since $\Psi$ is reverse-braided, we have
\begin{align*}
\tikzmath{
	\draw (0,-1) to[out=0,in=180] (1,0) --node[above, xshift=-.2cm]{$\scriptstyle \Psi\cP[\ell+k]$} (2,0);
	\draw[knot] (0,0) to[out=0,in=180] (1,-1) --node[below, xshift=-.2cm]{$\scriptstyle \Psi\cP[j+i]$} (2,-1);
	\draw (4.6,-.5) --node[above]{$\scriptstyle \Psi\cP[j+\ell+i+k]$} (6.6,-.5);
	\PsiTensor{(3.4,-.5)}{1.2}{i}{j}{k}{\ell}
}
&=
\Psi\left(
\tikzmath{
	\draw (0,0) to[out=0,in=180] (1,-1) --node[below, xshift=-.2cm]{$\scriptstyle \Psi\cP[j+i]$} (2,-1);
	\draw[knot] (0,-1) to[out=0,in=180] (1,0) --node[above, xshift=-.2cm]{$\scriptstyle \Psi\cP[\ell+k]$} (2,0);
	\draw (4.4,-.5) --node[above]{$\scriptstyle \cP[j+\ell+i+k]$} (6.2,-.5);
	\tensor{(3.2,-.5)}{1.2}{i}{j}{k}{\ell}
}
\right)
\\&=
\Psi\left(
\tikzmath{
	\draw (.5,-1) --node[above]{$\scriptstyle \cP[\ell+k]$} (2,-1);
	\draw (.5,0) --node[above]{$\scriptstyle \cP[j+i]$} (2,0);
	\draw (4.4,-.5) --node[above]{$\scriptstyle \cP[j+\ell+i+k]$} (6.2,-.5);
	\RevTensor{(3.2,-.5)}{1.2}{i}{j}{k}{\ell}
}
\right)
\\&=
\tikzmath{
	\draw (.5,-1) --node[above]{$\scriptstyle \Psi\cP[\ell+k]$} (2,-1);
	\draw (.5,0) --node[above]{$\scriptstyle \Psi\cP[j+i]$} (2,0);
	\draw (4.6,-.5) --node[above]{$\scriptstyle \Psi\cP[j+\ell+i+k]$} (6.6,-.5);
	\PsiRevTensor{(3.4,-.5)}{1.2}{i}{j}{k}{\ell}
}
\end{align*}
ensuring that the exchange axiom $(g_1\circ f_1)\otimes(g_2\circ f_2)=(g_1\otimes g_2)\circ(f_1 \otimes f_2)$ is satisfied (see \cite[\S 6.3]{MR4528312} for a similar computation).

The associators are induced from those of $\cC$:
$$
\alpha_{
a \otimes x^{i}
,
b \otimes x^{j}
,
c \otimes x^{k}
}
:=
\tikzmath{
\draw (0,-.7) --node[left]{$\scriptstyle (a\otimes b)\otimes c$} (0,-.3);
\draw (0,.3) --node[left]{$\scriptstyle a\otimes(b\otimes c)$} (0,.7);
\draw (1.8,0) --node[above]{$\scriptstyle \Psi\cP[2r]$} (2.9,0);
\roundNbox{unshaded}{(0,0)}{.3}{0}{0}{$\alpha$}
	\identityMap{(1.4,0)}{.4}{\ell\,\,\,}
}
\qquad\qquad
\ell:=i+j+k,
$$
and similarly for the unitors.
\item
$\cE_0$ is rigid with duals given by $(c \otimes x^{i})^\vee:=c^\vee \otimes x^{i}$,
and evaluation and coevaluation are given by
$$
\ev_{c \otimes x^i}
=
\begin{tikzpicture}[baseline=-.1cm]
	\draw (-.6,-.8) node[above, yshift=-12, xshift=1]{$\scriptstyle c^\vee$} -- (-.6,-.4) arc (180:0:.3cm) -- (0,-.8)node[above, yshift=-12]{$\scriptstyle c$};
	\draw (1.7,0) --node[above]{$\scriptstyle\Psi\cP[2i]$} (2.7,0);
	\node[xshift=1] at (-.3,.1) {\scriptsize{$\ev_{c}$}};
	\evaluationMap{(1.2,0)}{.5}{n}
\end{tikzpicture}
\qquad\qquad
\coev_{c \otimes x^i}
=
\begin{tikzpicture}[baseline=-.1cm]
	\draw (-.6,.8)node[above]{$\scriptstyle c$} -- (-.6,.4) arc (-180:0:.3cm) -- (0,.8)node[above, xshift=2]{$\scriptstyle c^\vee$};
	\draw (1.7,0) --node[above]{$\scriptstyle\Psi\cP[2i]$} (2.7,0);
	\node at (-.3,-.1) {\scriptsize{$\coev_{c}$}};
	\coevaluationMap{(1.2,0)}{.5}{n}
\end{tikzpicture}\,.
$$
In the unitary setting, the evaluation and coevaluation in $\cC$ come from our chosen unitary dual functor on $\cC$.

\item
The pivotal structure $\varphi^\cE:c \otimes x^{\otimes i} \to (c \otimes x^{i})^{\vee\vee}$ is given by
$$
\tikzmath{
	\draw (0,-1) -- (0,1);
	\roundNbox{unshaded}{(0,0)}{.4}{0}{0}{$\varphi_{c}$}
	\draw (1.8,0) --node[above]{$\scriptstyle \Psi\cP[2i]$} (2.8,0);
	\node at (-.3,-.8) {\scriptsize{$c$}};
	\node at (-.4,.8) {\scriptsize{$c^{\vee\vee}$}};
	\identityMap{(1.4,0)}{.4}{i\,\,\,}
}\,.
$$
\end{itemize}

\begin{rem}
\label{rem:EFusion}
When $\Phi: \cV\to \cD$ is fully faithful and $\cC$ has simple unit object, then the unit of $\cE_0$ is also simple. Indeed, in that case, $\cP[0]=1_\cV$, and hence
$$
\End_{\cE_0}(1_\cC\otimes x^0) = \cC(1_\cC\to 1_\cC \otimes \Psi\cP[0]) \cong \cC(1_\cC\to 1_\cC)\cong \bbC.
$$
\end{rem}

\begin{rem}
\label{rem:Comodules}
When $\Psi: \cV\to \cC$ is dominant and admits a left adjoint $I$, as in \cite{mitchell-thesis}, $\cE$ can be identified with
$\mathsf{coMod}_\cD(\Phi A)$ for $A = I(1_\cC)$. 
Indeed,
\begin{align*}
\cE_0\big(\Psi u \otimes x^i \to \Psi v \otimes x^j\big)
&=
\cC\big(\Psi u \to \Psi v \otimes \Psi\cP[j+i]\big)
\\&\cong
\cC\big(1_\cC \to \Psi (\overline{u}\otimes v \otimes \cP[j+i])\big)
\\&\cong
\cV\big(A \to \overline{u}\otimes v \otimes \Tr_\cV(x^{j+i})\big)
\\&\cong
\cV\big(A \to \Tr_\cV(x^i\otimes \Phi\overline{u} \otimes \Phi v \otimes x^{j})\big)
\\&=
\cV\big(A \to \underline{\Hom}_\cD(\Phi u \otimes x^i\to \Phi v \otimes x^{j})\big)
\\&=
\cD\big(\Phi u \otimes x^i\otimes \Phi A \to \Phi v \otimes x^{j}\big)
\\&\cong
\mathsf{coMod}_\cD(A)\big(\Phi u \otimes x^i\otimes \Phi A \to \Phi v \otimes x^{j}\otimes  \Phi A\big)
\end{align*}
where the $x$ which appears in the left hand side is a formal symbol, and the $x$ which appears in the right hand side is our chosen generator of $\cD$.
\end{rem}

\begin{prop}
\label{prop:BalancedTensorProduct}
The monoidal category $\cE$ constructed above is canonically equivalent, as monoidal category, to the balanced tensor product $\cC\boxtimes_\cV \cD$.
\end{prop}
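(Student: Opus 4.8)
The plan is to exhibit a fully faithful monoidal functor $\Theta_0\colon \cE_0\to \cC\boxtimes_\cV\cD$ whose essential image is dense, and then extend it to a monoidal equivalence $\Theta\colon\cE\to\cC\boxtimes_\cV\cD$ by passing to Cauchy completions (recall $\cE$ is the completion of $\cE_0$, and $\cC\boxtimes_\cV\cD$ is the completion of the ladder category $\cC\boxdot_\cV\cD$). On objects I would set $\Theta_0(c\otimes x^i):=F(c)\otimes G(x)^{\otimes i}=c\boxtimes_\cV x^i$, using the canonical inclusions $F,G$ of Lemma~\ref{lem:CanonicalInclusionsFF}. On morphisms $\Theta_0$ is forced to be the hom-space identification described next, so the real content is (i) that this identification is a natural bijection and (ii) that it intertwines composition and the tensor product.

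The full faithfulness is a hom-space computation, carried out on the ladder model, which unpacks exactly as in Remark~\ref{rem:Comodules}. Writing the box object as the $\cV$-valued trace $\cP[n]\cong\Tr_\cV(x^{n})$ (equivalently $\Phi^R(x^n)$ for $\Phi^R$ the right adjoint of $\Phi$), and using self-duality of the pointing $x$ together with the adjunction $\Phi\dashv\Phi^R$, I get
\[
\cD(\Phi v\otimes x^i\to x^j)\cong\cD(\Phi v\to x^j\otimes\overline{x^i})\cong\cD(\Phi v\to x^{i+j})\cong\cV\big(v\to\Tr_\cV(x^{i+j})\big)=\cV\big(v\to\cP[i+j]\big).
\]
Substituting into the ladder hom space and collapsing the sum over $\Irr(\cV)$ by the semisimple co-Yoneda isomorphism yields
\[
\bigoplus_{v\in\Irr(\cV)}\cC(a\to b\otimes\Psi v)\otimes\cV\big(v\to\cP[i+j]\big)\cong\cC\big(a\to b\otimes\Psi\cP[i+j]\big)=\cE_0(a\otimes x^i\to b\otimes x^j),
\]
which is the defining morphism space of $\cE_0$. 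Naturality in $a,b$ is automatic, and compatibility with composition reduces to functoriality of $\Phi^R$ together with the planar-algebra composition axiom, so $\Theta_0$ is a well-defined fully faithful functor.

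It remains to verify that $\Theta_0$ is monoidal. The associators, unitors, duals, and pivotal structure of $\cE_0$ are all transported from those of $\cC$ (this is exactly how $\alpha^\cE$, $\ev$, $\coev$, $\varphi^\cE$ are defined), so these coherences match formally. The substantive point is the tensorator. On objects, $(a\otimes x^i)\otimes(b\otimes x^j)=a\otimes b\otimes x^{i+j}$ is identified with $(a\boxtimes_\cV x^i)\otimes(b\boxtimes_\cV x^j)$ using the canonical centralizing structure $\sigma$ of Construction~\ref{construction:CanonicalCentralizing} to slide $x^i$ past $b$. On morphisms I must check that the graphical tensor product in $\cE_0$ — assembled from the planar-algebra \textit{PsiTensor} gadget and the crossing, the latter being the $Z(\cC)$ half-braiding of $\Psi^Z\cP[\,\cdot\,]$ — agrees with the tensor product of morphisms in the ladder category, whose monoidal coherence is governed by the balancing axioms \eqref{eq:MonoidalBalancingAxiom} and \eqref{eq:SigmaCompatibility}. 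This is precisely where the reverse-braidedness of $\Psi$ is used: the identity $\Psi(\beta_{u,v})=\beta^{-1}_{\Psi v,\Psi u}$ converts the $\cV$-braiding hidden in the planar tensor map (the passage from \textit{PsiTensor} to \textit{RevTensor} in the construction of $\cE_0$) into the half-braiding crossing that defines $\sigma$.

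The hard part will be exactly this last matching of the tensor product of morphisms: one must track how the anchored-planar-tangle composition, which involves the $\cV$-braiding $\beta$ and the twist $\theta$, corresponds to the centralizing structure $\sigma$ together with the balancing coherence $\eta$ (and $\zeta_v=\eta_{1,v,1}$), and confirm the two occurrences of the braiding agree after applying the reverse-braided $\Psi$; this is a bookkeeping computation of the kind performed in \cite[\S6.3]{MR4528312}. Granting it, $\Theta_0$ is a fully faithful monoidal functor. It is essentially surjective onto a dense subcategory: by the pointing hypothesis every $d\in\cD$ is a summand of some $\Phi v\otimes x^i$, and $c\boxtimes_\cV(\Phi v\otimes x^i)\cong(c\otimes\Psi v)\boxtimes_\cV x^i=\Theta_0\big((c\otimes\Psi v)\otimes x^i\big)$ via $\zeta_v$, so every object of $\cC\boxtimes_\cV\cD$ is a summand of something in the image. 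Passing to Cauchy completions then upgrades $\Theta_0$ to the asserted canonical monoidal equivalence $\cE\cong\cC\boxtimes_\cV\cD$.
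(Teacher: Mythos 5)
Your proposal is correct and takes essentially the same route as the paper's own proof: both work with the ladder-category model, send $c\otimes x^i\mapsto c\boxdot x^i$, identify hom spaces by the same chain of isomorphisms (the adjunction $\Phi\dashv\Tr_\cV$, rigidity/self-duality of $x$, and the semisimple decomposition over $\Irr(\cV)$), defer the verification that this is a tensor functor as a ``tricky'' bookkeeping computation, and finish with the same essential-surjectivity observation $a\boxtimes_\cV(\Phi v\otimes x^i)\cong(a\otimes\Psi v)\boxtimes_\cV x^i$ followed by passage to Cauchy completions. No substantive differences to report.
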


\begin{proof}
The category $\cE$ is the idempotent completion of $\cE_0$, and
the balanced tensor product $\cC\boxtimes_\cV \cD$ is canonically equivalent to the idempotent completion of the ladder category $\cC\boxdot_\cV \cD_0$,
where $\cD_0\subset\cD $ is the full subcategory on the objects of the form $\Phi v\otimes x^i$.
So it is enough to construct an equivalence $F_0$ of monoidal categories from $\cE_0$ to $\cC\boxdot_\cV \cD_0$.
We define $F_0$ by 
$$
F_0(c\otimes x^i):=c\boxdot x^i.
$$
On morphism spaces, we define $F_0$ by the following sequence of isomorphisms:
\begin{align*}
\cE_0(a\otimes x^i \to b\otimes x^j)
&=
\cC(a\to b\otimes \Psi\cP[j+i])
\displaybreak[1]\\&\cong
\bigoplus_{v\in \Irr(\cV)}\cC(a\to b\otimes \Psi v)\otimes \cV(v\to \cP[j+i])
\displaybreak[1]\\&=
\bigoplus_{v\in \Irr(\cV)}\cC(a\to b\otimes \Psi v)\otimes \cV(v\to\Tr_\cV(x^{j+i}))
\displaybreak[1]\\&\cong
\bigoplus_{v\in \Irr(\cV)}\cC(a\to b\otimes \Psi v)\otimes \cD(\Phi v\to x^{j+i})
\displaybreak[1]\\&\cong
\bigoplus_{v\in \Irr(\cV)}
\cC(a\to b\otimes \Psi v)\otimes \cD(\Phi v \otimes x^i\to x^j)
\displaybreak[1]\\&=
(\cC\boxdot_\cV \cD_0)\big(a\boxdot x^i \to b\boxdot x^j\big).
\end{align*}
The first isomorphism in the second line above uses the fact that each object $w\in \cV$ can be canonically decomposed into simple objects as $w=\bigoplus_{v\in \Irr(\cV)}\cV(v\to w)\otimes v$.
It is a somewhat tricky exercise to show that $F_0$ is a tensor functor, which is then automatically fully faithful.
Essential surjectivity follows since $a\boxdot (\Phi v \otimes x^i)\cong (a\otimes \Psi  v)\boxdot x^i$.
\end{proof}

\subsection{The unitary setting}

We now consider the case that $\cV$ is a unitary tensor category equipped with a unitary dual functor $\cV$.
Suppose we have a pointed unitary $\cV$-module multitensor category $(\cD,\Phi^Z:\cV\to Z(\cD),x,\vee_\cD,\psi_\cD)$ and a unitary $\cV^{\rev}$-module multitensor category $(\cC,\Psi^Z: \cV\to Z(\cC),\vee_\cC,\psi_\cC)$ where $\Psi^Z$ is a reverse-braided functor.
We further assume that $\Phi: \cV\to \cD$ 
admits a unitary adjoint \cite[\S2.1]{2301.11114}.

As in \cite[\S5.2.1]{2301.11114}, we can promote $\cE$ to a unitary multitensor category equipped with a unitary dual functor and canonical state.
This amounts to the following tasks:
\begin{enumerate}[label=(E\arabic*)]
\item 
\label{E:Dagger}
construct a $\dag$-structure on $\cE_0$ under which is is a unitary multitensor category,
\item
\label{E:State}
construct a faithful state $\psi_\cE$ on $\End_\cE(1_\cE)$, and
\item
\label{E:UDF}
check $\vee_\cE$ is unitary, and the canonical unitary pivotal structure induced by $\vee_\cE$ is $\varphi^\cE$.
\end{enumerate}

To accomplish \ref{E:Dagger} above, we define $\dag$ on $\cE_0$ as in \cite[(25)]{2301.11114}, but we apply $\Psi$ when necessary to push objects and morphisms from $\cV$ into $\cC$:
$$
\left(
\tikzmath{
\roundNbox{fill=white}{(0,0)}{.3}{0}{0}{$f$}
\draw (0,-.7) --node[left]{$\scriptstyle a$} (0,-.3);
\draw (0,.7) --node[left]{$\scriptstyle b$} (0,.3);
\draw (.3,0) --node[above]{$\scriptstyle \Psi\cP[n+k]$} (1.5,0);
}
\right)^*
=
\tikzmath{
\roundNbox{fill=white}{(0,0)}{.3}{0}{0}{$f^*$}
\draw (0,-.7) --node[left]{$\scriptstyle b$} (0,-.3);
\draw (0,.7) --node[left]{$\scriptstyle a$} (0,.3);
\draw (.3,0) --node[above]{$\scriptstyle \Psi\cP[n+k]$} (1.5,0);
}
:=
\tikzmath{
\roundNbox{fill=white}{(0,0)}{.3}{.1}{.1}{$f^\dag$}
\roundNbox{fill=white}{(1,0)}{.3}{.2}{.2}{$\scriptstyle \Psi r_{n+k}^{-1}$}
\draw (-.2,-.7) --node[left]{$\scriptstyle b$} (-.2,-.3);
\draw (0,.7) --node[left]{$\scriptstyle a$} (0,.3);
\draw (.2,-.3) 
arc (-180:0:.4cm) node[right, yshift=-.2cm]{$\scriptstyle \overline{\Psi\cP[n+k]}$};
\draw (1,.3) arc (180:90:.3cm) -- (1.6,.6) node[right]{$\scriptstyle \Psi\cP[k+n]$};
}\,.
$$
The proof that this defines a dagger structure on $\cE_0$ is entirely similar to the one in \cite{2301.11114}.

Since $\Psi$ is a unitary pivotal tensor functor between unitary multitensor categories equipped with unitary dual functors,
it automatically preserves the unitary pivotal structure, and thus $\Psi \coev_{\cP[i+j]}^\dag = \coev_{\Psi \cP[i+j]}^\dag$.
Thus by \cite[Lem.~5.9]{2301.11114}, the sesquilinear form on $\cE_0(a \otimes x^i \to b\otimes x^j)$
$$
\langle f, g\rangle
:=
\tikzmath{
	\coordinate (a) at (.2,0);
	\pgfmathsetmacro{\boxWidth}{1};
\draw (-2.8,-.2) --node[left]{$\scriptstyle a$} (-2.8,.2);
\draw (-1,.5) --node[above]{$\scriptstyle \Psi \cP[i{+}j]$} (-2.2,.5) -- (-2.5,.5);
\draw (-1,-.5) --node[above]{$\scriptstyle \Psi \cP[j{+}i]$} (-2.2,-.5) -- (-2.5,-.5);
\draw[knot] (-2.8,.8) node[left, yshift=.2cm, xshift=.1cm]{$\scriptstyle b$} arc (180:0:.3cm) -- (-2.2,-.8) node[right]{$\scriptstyle b^\vee$} arc (0:-180:.3cm) node[left, yshift=-.2cm, xshift=.1cm]{$\scriptstyle b$};
	\roundNbox{unshaded}{(-2.8,.5)}{.3}{0}{0}{$f$}
	\roundNbox{unshaded}{(-2.8,-.5)}{.3}{0}{0}{$g^*$}
	\draw[rounded corners=5pt, very thick, unshaded] ($ (a) - (\boxWidth,\boxWidth) - (.2,0) $) rectangle ($ (a) + (\boxWidth,\boxWidth) $);
	\draw ($ (a) + 1/3*(0,1) $) -- ($ (a) - 1/3*(0,\boxWidth) $);
	\draw[thick, red] ($ (a) + 1/3*(0,\boxWidth) - 1/5*(\boxWidth,0) $) -- ($ (a) - 2/3*(\boxWidth,0) $);
	\draw[thick, red] ($ (a) - 1/3*(0,\boxWidth) - 1/5*(\boxWidth,0) $) -- ($ (a) - 2/3*(\boxWidth,0) $);
	\draw[very thick] (a) ellipse ({2/3*\boxWidth} and {5/6*\boxWidth});
	\filldraw[very thick, unshaded] ($ (a) + 1/3*(0,\boxWidth) $) circle (1/5*\boxWidth);
	\filldraw[very thick, unshaded] ($ (a) - 1/3*(0,\boxWidth) $) circle (1/5*\boxWidth);
	\node at ($ (a) + (.3,0) $) {$\scriptstyle j{+}i$};
	\node at ($ (a) - (\boxWidth,0) $) {$\Psi$};
\draw[densely dotted] (1.2,0) --node[above]{$\scriptstyle 1_\cD$}  (2,0);
}
=
\tikzmath{
\roundNbox{unshaded}{(0,.5)}{.3}{.1}{.1}{$f$}
\roundNbox{unshaded}{(0,-.5)}{.3}{.1}{.1}{$g^\dag$}
\draw (-.2,.8) node[left, yshift=.2cm]{$\scriptstyle b$} 
.. controls ++(90:.7cm) and ++(90:.7cm) .. (2,.8) --node[right]{$\scriptstyle b^\vee$}(2,-.8) 
.. controls ++(270:.7cm) and ++(270:.7cm) ..
(-.2,-.8) node[left, yshift=-.2cm]{$\scriptstyle b$};
\draw (.2,.8) .. controls ++(90:.5cm) and ++(90:.5cm) .. 
(1.6,.8) --node[left,xshift=.1cm]{$\scriptstyle \Psi \cP[n]^\vee$} (1.6,-.8)
.. controls ++(270:.5cm) and ++(270:.5cm) .. (.2,-.8);
\draw (0,-.2) --node[left]{$\scriptstyle a$} (0,.2);
}
$$
is positive definite.
This immediately implies as in \cite[Prop.~5.10]{2301.11114} that the $2\times 2$ linking algebras
$$
L:=
\begin{pmatrix}
\cE_0(a\otimes x^{i} \to a\otimes x^{i})
&
\cE_0(b\otimes x^{j} \to a\otimes x^{i})
\\
\cE_0(a\otimes x^{i} \to b\otimes x^{j})
&
\cE_0(b\otimes x^{j} \to b\otimes x^{j})
\end{pmatrix}
$$
are finite dimensional $\rm C^*$-algebras, proving $\cE_0$ is $\rm C^*$.
We have thus established \ref{E:Dagger}.

The construction of a faithful state $\psi_\cE$ on $\End_\cE(1_\cE)$ is similar to \cite[Cor.~5.12]{2301.11114}
$$
\psi_\cE\left(
\tikzmath{
\roundNbox{fill=white}{(0,0)}{.3}{0}{0}{$f$}
\draw[densely dotted] (0,-.7) --node[left]{$\scriptstyle 1_\cC$} (0,-.3);
\draw[densely dotted] (0,.7) --node[left]{$\scriptstyle 1_\cC$} (0,.3);
\draw (.3,0) --node[above]{$\scriptstyle \Psi\cP[0]$} (1.3,0);
}
\right)
:=
\tikzmath{
\roundNbox{fill=white}{(0,0)}{.3}{0}{0}{$f$}
\roundNbox{fill=white}{(1.7,0)}{.3}{.2}{.2}{$\Psi\psi_\cP$}
\draw[densely dotted] (0,-.7) --node[left]{$\scriptstyle 1_\cC$} (0,-.3);
\draw[densely dotted] (0,.7) --node[left]{$\scriptstyle 1_\cC$} (0,.3);
\draw (.3,0) --node[above]{$\scriptstyle \Psi\cP[0]$} (1.2,0);
\draw[densely dotted] (2.2,0) --node[above]{$\scriptstyle 1_\cC$} (2.6,0);
}\,,
$$
establishing \ref{E:State}.
Finally, that $\vee_\cE$ is unitary and induces $\varphi^\cE$ is similar to \cite[Prop.~5.13]{2301.11114} adding $\Psi$s when necessary, establishing \ref{E:UDF}.

By the same argument as \cite[Prop.~5.15]{2301.11114},
when $\vee_\cV$ and $\psi_\cP$ are spherical, then so is $\psi_\cE$.
In fact, instead of demanding that $\psi_\cP$ be spherical, the following weaker condition is sufficient (in addition to sphericality of $\vee_\cV$):
$$
\tikzmath{
	\coordinate (a) at (0,0);
	\pgfmathsetmacro{\boxWidth}{1};
	\draw (-2.4,0) --node[above]{$\scriptstyle \cP[2]$} (-1.4,0);
	\draw (1,0) --node[above]{$\scriptstyle \cP[0]$} (1.7,0);
	\draw[dotted] (2.6,0) --node[above]{$\scriptstyle 1_\cC$} (3.3,0);
	\roundNbox{unshaded}{(2.3,0)}{.3}{.3}{0}{$\Psi\psi_\cP$}
	\draw[rounded corners=5pt, very thick, unshaded] ($ (a) - (\boxWidth,\boxWidth) - (.4,0) $) rectangle ($ (a) + (\boxWidth,\boxWidth) $);
  \node at (-1.1,0) {$\Psi$};
	\draw[thick, red] (180:1/5*\boxWidth) -- (180:4/5*\boxWidth);
	\draw[very thick] (a) circle (4/5*\boxWidth);
	\draw[very thick] (a) circle (1/5*\boxWidth);
	\draw (90:1/5*\boxWidth) .. controls ++(90:.3cm) and ++(90:.5cm) .. (180:1/2*\boxWidth) .. controls ++(270:.5cm) and ++(270:.3cm) ..  (270:1/5*\boxWidth);
}
=
\tikzmath{
	\coordinate (a) at (0,0);
	\pgfmathsetmacro{\boxWidth}{1};
	\draw (-2.4,0) --node[above]{$\scriptstyle \cP[2]$} (-1.4,0);
	\draw (1,0) --node[above]{$\scriptstyle \cP[0]$} (1.7,0);
	\draw[dotted] (2.6,0) --node[above]{$\scriptstyle 1_\cC$} (3.3,0);
	\roundNbox{unshaded}{(2.3,0)}{.3}{.3}{0}{$\Psi\psi_\cP$}
	\draw[rounded corners=5pt, very thick, unshaded] ($ (a) - (\boxWidth,\boxWidth) - (.4,0) $) rectangle ($ (a) + (\boxWidth,\boxWidth) $);
  \node at (-1.1,0) {$\Psi$};
	\draw[thick, red] (180:1/5*\boxWidth) -- (180:4/5*\boxWidth);
	\draw[very thick] (a) circle (4/5*\boxWidth);
	\draw[very thick] (a) circle (1/5*\boxWidth);
	\draw (90:1/5*\boxWidth) .. controls ++(90:.3cm) and ++(90:.5cm) .. (0:1/2*\boxWidth) .. controls ++(270:.5cm) and ++(270:.3cm) ..  (270:1/5*\boxWidth);
}\,.
$$

\section{Classification of finite depth objects in \texorpdfstring{$\cC'$}{C'}
}

Let $\cC$ be a unitary fusion category fully faithfully embedded in $\Bim(R)$ where $R$ is a hyperfinite factor of type $\mathrm{II}_1$, $\mathrm{II}_\infty$, or $\mathrm{III}_1$.
We denote by $\cC'$ its commutant category defined in \S\ref{sec:BicommutantCats} above.

Recall that our main goal is to prove Conjecture \ref{conjIntro} in the case that $\cB = \cC^\prime$: finite depth real (symmetrically self-dual) objects of $\cC'$ up to conjugation by invertible elements of $\cC'$ are in bijective correspondence with connected finite depth unitary anchored planar algebras in $\cZ(\cC)^{\rev}$ up to isomorphism.

\subsection{From finite depth objects to anchored planar algebras}

We first fix a symmetrically self-dual object $m \in \cC'$, and we construct a connected unitary anchored planar algebra $\cP$ in $\cZ(\cC)^{\rev} \cong \cZ(\cC')$.
Since $m\in \cC'$, it is equipped with a half-braiding, which we'll call $\{\eta_{m,d}:m\otimes d\to d\otimes m\}_{d\in \cC}$.

Let $\cM:=\langle m, Z(\cC)\rangle$ be the tensor $\rm C^*$-subcategory of $\cC'$ generated by $m$ and the image of $Z(\cC)$ in $\cC'$ (under the operations of tensor product, orthogonal direct sums, and orthogonal direct summands).
By Corollary \ref{cor:M is a Z(C)rev module tensor cat}, there is a canonical $Z(\cC)^{\rev}$-module tensor category structure on $\cM$.
We equip $\cM$ with its unique spherical unitary dual functor.
Finally, we obtain a unitary anchored planar algebra $\cP$ from the unitary pivotal $Z(\cC)$-module tensor category $(\cM,m)$ by \cite{2301.11114}.

Observe that when $m$ has finite depth, then $\cM$ is fusion by Lemma \ref{lem:CommutingFusionCategoriesGenerateFusion} below.
In this case, $\cP$ is finite depth.

\begin{lem}
\label{lem:CommutingFusionCategoriesGenerateFusion}
Let $\cM$ be a semisimple rigid tensor category with simple unit object, and suppose $\cC,\cD\subset \cM$ are fusion subcategories that generate $\cM$ under tensor products and subobjects.
If $c\otimes d \cong d\otimes c$ for every $c\in \cC$ and $d\in \cD$, then $\cM$ is again fusion.
\end{lem}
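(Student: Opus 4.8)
The plan is to observe that the only fusion axiom not already guaranteed by the hypotheses is finiteness: by assumption $\cM$ is a semisimple rigid tensor category with simple unit object, so it suffices to show that $\cM$ has only finitely many isomorphism classes of simple objects (finite-dimensionality of the Hom spaces then follows, as the simple objects of a $\C$-linear semisimple category are absolutely simple). I would reduce this finiteness statement to a structural fact about which objects occur in $\cM$, namely that every object is a direct summand of $c\otimes d$ for some $c\in\cC$ and $d\in\cD$.

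First I would unwind the generation hypothesis. Since $\cC$ and $\cD$ generate $\cM$ under tensor products and subobjects, every object of $\cM$ is a subobject of some iterated tensor product $g_1\otimes\cdots\otimes g_k$ in which each $g_i$ lies in $\cC$ or in $\cD$; and because $\cM$ is semisimple, ``subobject'' may be upgraded to ``direct summand''.

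The heart of the argument is a bubble sort using the commutativity hypothesis. Whenever a $\cD$-factor sits immediately to the left of a $\cC$-factor in such a word, I replace the adjacent pair $d\otimes c$ by $c\otimes d$ via the isomorphism $d\otimes c\cong c\otimes d$ (the hypothesis, with the roles of $c$ and $d$ interchanged), tensored with identities on the remaining factors; this produces an isomorphic object of $\cM$. Iterating until no such adjacent pair remains, the word becomes isomorphic to one in which all $\cC$-factors precede all $\cD$-factors. As $\cC$ and $\cD$ are each closed under tensor product, the leading block of $\cC$-factors is a single object $c\in\cC$ and the trailing block of $\cD$-factors is a single $d\in\cD$ (taking $c=1$ or $d=1$ for an empty block, which is legitimate since $1$ lies in both $\cC$ and $\cD$). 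Hence $g_1\otimes\cdots\otimes g_k\cong c\otimes d$, and so every object of $\cM$, in particular every simple object, is a direct summand of some $c\otimes d$ with $c\in\cC$ and $d\in\cD$.

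Finally I would conclude the count: decomposing $c$ and $d$ into simples, every simple object of $\cM$ appears as a summand of $c\otimes d$ for some $c\in\Irr(\cC)$ and $d\in\Irr(\cD)$. Since $\cC$ and $\cD$ are fusion, the sets $\Irr(\cC)$ and $\Irr(\cD)$ are finite, and each of the finitely many objects $c\otimes d$ decomposes into finitely many simples of $\cM$; therefore $\Irr(\cM)$ is finite and $\cM$ is fusion. I expect the only delicate point to be the bookkeeping in the bubble sort, namely verifying that the swapping procedure terminates and that each swap is realized by an honest isomorphism of objects of $\cM$. Since we work only up to isomorphism, no coherence or naturality of the commutativity isomorphisms $c\otimes d\cong d\otimes c$ is required, so this step is routine rather than a genuine obstacle.
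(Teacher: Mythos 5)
Your proof is correct and follows essentially the same route as the paper's: the paper's (one-sentence) proof asserts that the simple summands of the finitely many objects $c\otimes d$, with $c\in\Irr(\cC)$ and $d\in\Irr(\cD)$, form a complete set of simples for $\cM$, which is exactly the finiteness reduction you carry out. Your bubble-sort rearrangement and the upgrade of ``subobject'' to ``direct summand'' via semisimplicity just make explicit the steps the paper leaves implicit.
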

\begin{proof}
A complete set of simples for $\cM$ is obtained by taking all distinct simple summands of the finitely many objects of the form $c\otimes d$ where $c\in \Irr(\cC)$ and $d\in \Irr(\cD)$.
\end{proof}

\subsection{From anchored planar algebras to finite depth objects}

Suppose $\cP$ is a connected unitary anchored planar algebra in $Z(\cC)^{\rev}$.
By the main result of our previous paper \cite{2301.11114}, $\cP$ corresponds to a pointed unitary module tensor category $(\cM,m)$.
By the construction from \S\ref{sec:BalancedTensor}, noting that the forgetful functor $Z(\cC)^{\rev}\to \cC$ is reverse braided, we can then form the balanced tensor product $\cE:=\cC\boxtimes_{Z(\cC)^{\rev}} \cM$.
It will be convenient to also think of $\cE=\cM \boxtimes_{Z(\cC)} \cC$ during this construction.
By Construction \ref{construction:CanonicalCentralizing}, $\cE$ can be equipped with a canonical centralizing structure $\{\sigma_{m,c}: m\otimes c \to c\otimes m\}_{m\in \cM, c\in \cC}$.

Since $\cC$ is fusion, it has simple unit object.
Since $\cP$ is connected, $\Phi:Z(\cC)\to \cM$ is fully faithful.
Thus 
$\cE$ has simple unit object
and
$\cC\hookrightarrow \cE=\cM\boxtimes_{Z(\cC)}\cC$ is fully faithful by Lemma \ref{lem:CanonicalInclusionsFF}.
When $\cP$ has finite depth, $\cM$ is fusion, and thus $\cE$ is also fusion by 
Corollary \ref{cor:RelativeProductFusion}.
$$
\begin{tikzcd}
&
Z(\cC)^{\rev} 
\arrow[r,"\Forget"]
\arrow[d,hookrightarrow,"\Phi"]
&
\cC
\arrow[d,hookrightarrow]
\arrow[dr,hookrightarrow]
\\
\{m\}
\arrow[r] 
& 
\cM
\arrow[r] 
&
\cE
\arrow[r,hookrightarrow,"\alpha"]
&
\Bim(R)
\end{tikzcd}
$$

As reviewed in \cite[\S 3.2]{MR4581741}, by \cite{MR1055708,MR1339767,MR3635673,MR4236062}, there is a fully faithful unitary tensor functor $\alpha:\cE\to \Bim(R)$, which is unique up to conjugation by an invertible object of $\Bim(R)$.
The restriction of $\alpha|_\cC:\cC\to \Bim(R)$ is similarly unique up to conjugation by an invertible object of $\Bim(R)$.
Since $\cC$ was alreday given to us as a full subcategory of $\Bim(R)$, conjugating by a suitable invertible object of $\Bim(R)$, we may assume that $\alpha|_\cC$ agrees with our initial presentation of $\cC$ (that is, $\alpha|_\cC=\id_\cC$).
Such tensor functors $\alpha:\cE\to \Bim(R)$ satisfying $\alpha|_\cC=\id_\cC$ are unique up to conjugation by an invertible object of $\cC'$.

Since $\cC\hookrightarrow \cE$ is fully faithful,
our centralizing structure $\sigma$ canonically promotes the image of each object $n\in \cM$ in $\cE$ to the relative center $Z_\cC(\cE)$, i.e., objects in $\cE$ which are equipped with half-braidings with objects in $\cC$.
Taking the image of $\sigma$ in $\Bim(R)$, we immediately get a unitary tensor functor $\cM \to \cC'$ such that the following diagram commutes:
$$
\begin{tikzcd}
&
Z(\cC)^{\rev} 
\arrow[r,"\Forget"]
\arrow[d,hookrightarrow,"\Phi"]
&
\cC
\arrow[d,hookrightarrow]
\arrow[dr,hookrightarrow]
\\
\{m\}
\arrow[r] 
& 
\cM
\arrow[dr, hookrightarrow]
\arrow[r] 
&
\cE
\arrow[r,hookrightarrow,"\alpha"]
&
\Bim(R)
\\
&&
Z_\cC(\cE)
\arrow[u,swap,"\Forget"]
\arrow[r, hookrightarrow]
&
\cC'.
\arrow[u,swap,"\Forget"]
\end{tikzcd}
$$
Now the image of $m$ in $\cC'$ is our desired symmetrically self-dual finite depth object.

The functor $\cM\hookrightarrow Z_\cC(\cE)$ is fully faithful by Lemma~\ref{lem:RelativeCenterFF}. It follows that $\cM\to \cC'$ is also fully faithful.

\subsection{Uniqueness of \texorpdfstring{$\cM\to \cC'$}{M->C'}}

Let $\cM$ be a unitary $Z(\cC)^{\rev}$-module tensor category such that the composite unitary tensor functor $\Phi : Z(\cC)\to \cM$ is fully faithful (in terms of the associated anchored planar algebra, this is the condition that $\cP[0]$ is connected).
Our next goal is to show that given two fully faithful $Z(\cC)^{\rev}$-central unitary tensor functors $\cM\to \cC'$ (a.k.a.~functors of $Z(\cC)^{\rev}$-module tensor categories), there exists an invertible object of $\cC'$ that conjugates one into the other.

Consider $\cC\subset \Bim(R)$.
We claim that the unitary tensor functor $\cM \hookrightarrow \cC' \to \Bim(R)$ and the inclusion $\cC\subset \Bim(R)$ assemble to a fully faithful unitary tensor functor $\beta:\cE=\cM \boxtimes_{Z(\cC)}\cC \to \Bim(R)$ such that $\beta|_\cC=\id_\cC$.

Observe that the underlying $R-R$ bimodule of an object of $Z(\cC)^{\rev}$ lives in $\cC\subset \Bim(R)$.
Moreover, our tensor functor $G:\cM \to \cC'$ is a morphism of unitary $Z(\cC)^{\rev}$-module tensor categories.
This means that we have a unitary action-coherence morphism $\gamma : \Phi_2\Rightarrow G\Phi_1$
where $\Phi_1: Z(\cC)^{\rev}\to \cM$ and $\Phi_2: Z(\cC)^{\rev}\to \cC'$ satisfying the coherence conditions from \cite[Def.~3.2]{MR4528312} (see also \cite[Def.~3.3]{2301.11114}).
Since $\Forget\circ\Phi_2$ is identically the forgetful functor $Z(\cC)\to \cC$,
the external square of the following digram 
$$
\begin{tikzcd}
&
Z(\cC)^{\rev} 
\arrow[r,"\Forget"]
\arrow[d,hookrightarrow,"\overset{\gamma}{\Longrightarrow}\,\,\,",swap]
\arrow[dl,hookrightarrow,swap,"\Phi_1"]
&
\cC
\arrow[d,hookrightarrow,"\Phi_2"]
\\
\cM
\arrow[r,hookrightarrow, "G"]
&
\cC'
\arrow[r,"\Forget"]
&
\Bim(R)
\end{tikzcd}
$$
commutes on the nose.
Thus the image of $Z(\cC)^{\rev}$ inside the images of $\cC$ and $\cM$ in $\Bim(R)$ are identical.
This equips the canonical map $\cC\boxtimes \cM \to \Bim(R)$ (which is the identity on $\cC$) with a $Z(\cC)^{\rev}$-balancing structure.

This map thus descends to a map $\beta:\cC\boxtimes_{Z(\cC)^{\rev}}\cM \to \Bim(R)$ (which is still the identity on $\cC$) by the universal property of the relative tensor product.
As in the previous section, we identify $\cC\boxtimes_{Z(\cC)^{\rev}}\cM = \cM \boxtimes_{Z(\cC)}\cC=\cE$.
Since the inclusion $\cM \to \cC'$ is fully faithful, $\cE$ is a full subcategory of $\cC'\boxtimes_{Z(\cC)}\cC$, the latter of which is equivalent to $\Bim(R)$ by Corollary \ref{cor:FactorizeBimR}.
We conclude the map $\beta:\cE\to \Bim(R)$ is fully faithful.

We now have two fully faithful unitary tensor functors $\alpha, \beta: \cE\to \Bim(R)$.
As reviewed in \cite[\S 3.2]{MR4581741}, by \cite{MR1055708,MR1339767,MR3635673,MR4236062}, there is an isomorphism of representations $(\Phi,\phi):\alpha \to \beta$, i.e., an invertible $\Phi\in \Bim(R)$ equipped with a natural family of unitary isomorphisms
$$
\big\{\phi_e : \Phi \boxtimes \alpha(e) \to \beta(e)\boxtimes \Phi\big\}_{e\in\cE}
$$
satisfying the coherence condition \cite[(7)]{MR4581741} ($(\Phi,\phi)$ is a natural transformation between 2-functors).
Since $\alpha|_\cC = \beta|_\cC = \id_\cC$, we see that $\phi|_\cC$ promotes $\Phi$ to an object of $\cC'$.
We also claim each $\phi_m$ is morphism in $\cC'$.
Indeed,
representing the canonical centralizing structure for objects $c\in \cC$ and $m\in \cM$ by a crossing, we have
$$
\tikzmath{
\draw[thick, blue] (.9,-.7) node[below]{$\scriptstyle \alpha(c)$} -- (.9,.7) -- (.3,1.3) .. controls ++(90:.3cm) and ++(-90:.3cm) .. (-.3,2) node[above]{$\scriptstyle \beta(c)$};
\draw[thick, orange, knot] (.3,-.7) node[below]{$\scriptstyle \alpha(m)$} -- (.3,-.3) -- (-.3,.3) -- (-.3,1.3) .. controls ++(90:.3cm) and ++(-90:.3cm) .. (.3,2) node[above]{$\scriptstyle \beta(m)$};
\draw (-.3,-.7) node[below]{$\scriptstyle \Phi$} -- (-.3,-.3) -- (.3,.3) -- (.3,.7) -- (.9,1.3) -- (.9,2);
\roundNbox{fill=white}{(0,0)}{.3}{.2}{.2}{$\phi_m$}
\roundNbox{fill=white}{(.6,1)}{.3}{.2}{.2}{$\phi_c$}
\draw[very thick, dotted, rounded corners = 5pt] (-.5,.5) rectangle (1.3,1.9);
\node at (-1.5,1.2) {$e_{\beta(m)\boxtimes \Phi,c}$};
}
=
\tikzmath{
\draw[thick, blue] (.9,-.7) node[below]{$\scriptstyle \alpha(c)$} -- (.9,.2) -- (.3,.8) --(.3,1.3) .. controls ++(90:.3cm) and ++(-90:.3cm) .. (-.3,2) node[above]{$\scriptstyle \beta(c)$};
\draw[thick, orange, knot] (.3,-.7) node[below]{$\scriptstyle \alpha(m)$} -- (.3,.2) -- (-.3,.8) -- (-.3,1.3) .. controls ++(90:.3cm) and ++(-90:.3cm) .. (.3,2) node[above]{$\scriptstyle \beta(m)$};
\draw (-.3,-.7) node[below]{$\scriptstyle \Phi$} -- (-.3,.2) -- (.9,.3) -- (.9,2);
\roundNbox{fill=white}{(0,.5)}{.3}{.2}{.8}{$\phi_{m\otimes c}$}
}
=
\tikzmath[scale=-1]{
\draw[thick, blue] (.9,-.7) node[above]{$\scriptstyle \beta(c)$} -- (.9,.2) -- (.3,.8) --(.3,1.3) .. controls ++(90:.3cm) and ++(-90:.3cm) .. (-.3,2) node[below]{$\scriptstyle \alpha(c)$};
\draw[thick, orange, knot] (.3,-.7) node[above]{$\scriptstyle \beta(m)$} -- (.3,.2) -- (-.3,.8) -- (-.3,1.3) .. controls ++(90:.3cm) and ++(-90:.3cm) .. (.3,2) node[below]{$\scriptstyle \alpha(m)$};
\draw (-.3,-.7)  -- (-.3,.2) -- (.9,.3) -- (.9,2) node[below]{$\scriptstyle \Phi$};
\roundNbox{fill=white}{(0,.5)}{.3}{.2}{.8}{$\phi_{c\otimes m}$}
}
=
\tikzmath[scale=-1]{
\draw[thick, blue] (.9,-.7) node[above]{$\scriptstyle \beta(c)$} -- (.9,.7) -- (.3,1.3) .. controls ++(90:.3cm) and ++(-90:.3cm) .. (-.3,2) node[below]{$\scriptstyle \alpha(c)$};
\draw[thick, orange, knot] (.3,-.7) node[above]{$\scriptstyle \beta(m)$} -- (.3,-.3) -- (-.3,.3) -- (-.3,1.3) .. controls ++(90:.3cm) and ++(-90:.3cm) .. (.3,2) node[below]{$\scriptstyle \alpha(m)$};
\draw (-.3,-.7) -- (-.3,-.3) -- (.3,.3) -- (.3,.7) -- (.9,1.3) -- (.9,2) node[below]{$\scriptstyle \Phi$};
\roundNbox{fill=white}{(0,0)}{.3}{.2}{.2}{$\phi_m$}
\roundNbox{fill=white}{(.6,1)}{.3}{.2}{.2}{$\phi_c$}
\draw[very thick, dotted, rounded corners = 5pt] (-.5,.5) rectangle (1.3,1.9);
\node at (-1.5,1.2) {$e_{\Phi\boxtimes \alpha(m),c}$};
}\,.
$$
We conclude that the two fully faithful unitary tensor functors $\cM \hookrightarrow \cC'$ are conjugate by an invertible object in $\cC'$.

Using the equivalence between unitary anchored planar algebras and pointed unitary module tensor categories,
the above argument shows that the composite 
\[
\!\!\Bigg\{\!
\parbox{2.6cm}{
\centerline{\rm Finite depth}\centerline{\rm objects of $\cC'$}
}
\!\Bigg\}\Bigg/\text{\rm conj.}
\,\to\,\,\,
\Bigg\{
\parbox{4.4cm}{
\rm Connected finite depth \centerline{\rm unitary APAs in $Z(\cC)^{\rev}$}
}
\Bigg\}
\!\!\Bigg/\text{\rm iso.}
\,\to\,\,\,
\Bigg\{\!
\parbox{2.6cm}{
\centerline{\rm Finite depth}\centerline{\rm objects of $\cC'$}
}
\!\Bigg\}\Bigg/\text{\rm conj.}
\]
is the identity.

The other composite, from unitary anchored planar algebras to finite depth objects of $\cC'$ back to unitary anchored planar algebras produces an isomorphic unitary anchored planar algebra because unitarily equivalent pointed unitary module tensor categories give equivalent unitary anchored planar algebras by \cite[Thm.~A]{2301.11114}.

\bibliographystyle{amsalpha}
{\footnotesize{
\bibliography{../../bibliography/bibliography}
}}
\end{document}